\documentclass[11pt]{article}
\usepackage{currfile}
\usepackage{color}
\usepackage{mathrsfs}
\usepackage{graphicx}
\usepackage{caption}
\usepackage{float}
\usepackage{subfigure}
\usepackage{tikz}
\usepackage{amsmath,amssymb}
\usepackage[citecolor=blue]{hyperref}

\newtheorem{theorem}{Theorem}[section]

\newtheorem{conjecture}[theorem]{Conjecture}

\newtheorem{lemma}[theorem]{Lemma}

\newtheorem{claim}{Claim}
\newtheorem{fact}{Fact}

\newtheorem{case} {Case}

\newenvironment{proof}{\noindent {\bf Proof.}}{\rule{3mm}{3mm}\par\medskip}
 \newenvironment{theproof}[1][Proof]{\noindent {\bf #1.}}{\rule{3mm}{3mm}\par\medskip}

\date{}

\begin{document}

\title{The minimum  spectral radius of $tP_4$-saturated graphs}

\author{Junxue Zhang$^a$ and Liwen Zhang$^{b}$\thanks{ 
Corresponding author. }\\
\footnotesize $^a$ School of Mathematics and Statistics, Beijing Institute of Technology, Beijing, 100081, China\\ 
\footnotesize $^b$ Center for Combinatorics and LPMC, Nankai University, Tianjin, 300071, China\\
\footnotesize  jxuezhang@163.com, levenzhang512@163.com }
\maketitle

\begin{abstract}
A graph $G$ is called {\em$F$-saturated} if $G$ does not contain $F$ as a subgraph but adding any missing edge to $G$ creates a copy of $F$. 
In this paper, we consider the spectral saturation problem for the linear forest $tP_4$, proving that every $n$-vertex $tP_4$-saturated graph $G$  with $t\geq 2$ and $n\ge 4t$  satisfies $\rho(G)\ge \frac{1+\sqrt{17}}{2}$, and characterizing all $tP_4$-saturated graphs for which equality holds. Moreover, we obtain that, for $t=2$ with odd $n\ge 13 $, and for $t\ge 3$ with $n\ge 6t+4$, the set of $n$-vertex $tP_4$-saturated graphs minimizing the spectral radius is disjoint from that minimizing the number of edges.
\end{abstract}

\noindent\textbf{Keywords:} saturated graph; spectral radius; linear forest\\

\section{Introduction}

Let $A(G)$ be the \textit{adjacency matrix} of $G$. The \textit{spectral radius} of $G$ is defined as $\rho(G)=\max\{|\lambda|: \lambda$ is an eigenvalue of $A(G)$\}. Let $F$ be a simple graph. A graph $G$ is \emph{$F$-free} if there is no subgraph of $G$ is isomorphic to $F$. In 1986, Brualdi and Solheid \cite{BS1986} posed the following problem in spectral theory of graphs: Given a set of graphs $\mathcal{G}$, determine $\min\{\rho(G): G\in\mathcal{G} \}$ and $\max\{\rho(G): G\in\mathcal{G} \}$. In 2010, combining this with the classic Tur\'{a}n-type extremal problem (see \cite{T1941}), Nikiforov \cite{NV2010} proposed a Brualdi-Solheid-Tur\'{a}n-type problem: What is the maximum spectral radius among all $F$-free graphs on $n$ vertices? 
Nikiforov \cite{NV2007} showed that if $G$ is a $K_{r+1}$-free graph on $n$ vertices, then $\rho(G)\le \rho(T_{r}(n))$ and equality holds if and only if $G\cong T_r(n)$, where $T_r(n)$ is the Tur\'{a}n graph. 
Subsequently, this problem has been extensively studied for various classes of graphs, such as complete graphs \cite{NV2002,NV2007,W1986}, cycles \cite{LNW2021,NV2008,ZL2023}, and forests \cite{CDT2023,FLSZ2024,NV2010}. Further results can be found in three surveys \cite{CMZX2018, LLF2022,NV2011}. Denote by $\mathbf{EX}(n,F)$ and $\mathbf{EX_{sp}}(n,F)$ the set of $F$-free graphs on $n$ vertices with the maximum number of edges and the maximum spectral radius, respectively. It has been shown in the literature that $\mathbf{EX_{sp}}(n,F) \subseteq \mathbf{EX}(n,F)$ for some graphs $F$. Wang, Kang and Xue \cite{WKX2023} proved that if $F$ is a graph such that each graph in $\mathbf{EX}(n,F)$ is obtained from $T_r(n)$ by adding $O(1)$ edges, then $\mathbf{EX_{sp}}(n,F) \subseteq\mathbf{EX}(n,F)$ holds for sufficiently large $n$, thereby completely solving the conjecture posed by Cioab\u{a}, Desai and Tait \cite{CDT2022}.

A natural counterpart to the Brualdi-Solheid-Tur\'{a}n-type problem is the study of the spectral saturation problem. 
A graph $G$ is {\em $ {F}$-saturated} if $G$ is $F$-free but $G+e$ contains a subgraph isomorphic to $F$ for any $e\in E(\overline{G})$. The minimum number of edges and spectral radius of $F$-saturated graphs on $n$ vertices are respectively denoted by $sat(n, {F})$ and $sat_{sp}(n, {F})$. And denote by $\mathbf{SAT}(n, F )$ (resp. $\mathbf{SAT_{sp}}(n, F)$) the set of ${F}$-saturated graphs on $n$ vertices with size $sat(n, {F})$ (resp. spectral radius $sat_{sp}(n, {F})$. 
Combining the Brualdi-Solheid problem with the saturation problem, 
 Kim, Kim, Kostochka, and O \cite{KKKO2020} investigated the lower bound of $sat_{sp}(n, K_{r+1})$, providing an asymptotically tight lower bound for the spectral radius of any $K_{r+1}$-saturated graph. Later, Kim, Kostochka, O, Shi, and Wang \cite{KKOSW2023} determined $sat_{sp}(n, {K_{r+1}})=\rho(S_{n,r})$ for $r=\{2,3\}$, where $S_{n,r}=K_{r-1}\vee \overline{K}_{n-r+1}$. Recently, Wang and Hou \cite{WH2024} proved $sat_{sp}(n, {K_{r+1}})=\rho(S_{n,r})$ for $r\in \{4,5\}$ and Ai, Liu, O, and Zhang \cite{ALOZ} showed $sat_{sp}(n, {K_{r+1}})=\rho(S_{n,r})$ for $r=4$. Furthermore, in \cite{ALOZ}, they also determined the sharp lower bound on the spectral radius of $tP_{3}$-saturated graphs. Notably, these results suggest that $\mathbf{SAT}(n, {F})\subseteq \mathbf{SAT_{sp}}(n, {F})$ when $F\in \{K_3,K_4,K_5,K_6,tP_3\}$.

 In this paper, we focus on $tP_4$-saturated graphs, where $tP_4$ is the disjoint union of $t$ copies of paths on four vertices. For $t=1$,  K\'{a}szonyi and Tuza \cite{KT86} showed that $sat(n,P_4)=
		\frac{n }{2}$    when $n$ is even, and 
		$sat(n,P_4)=\frac{n+3}{2}$ when $n$ is odd. 
 For $t\ge 2$, 
 Chen, Faudree, Faudree, Gould, Jacobson, and Magnant \cite{CFFGJM2015} raised the conjecture on the saturation number for $tP_{4}$-saturated graphs. 
\begin{conjecture}\rm{(\cite{CFFGJM2015}, Conjecture 1.1)}\label{conjtpk}
Let $t\geq2$ be an integer. For $n$ sufficiently large, $$sat(n, tP_4)=\begin{cases}
		\frac{n+12t-12}{2} ,     &\text{if $n$ is even};  \\
		\frac{n+12t-11}{2} ,    & \text{if $n$ is odd}.\\
	\end{cases}$$\\
Moreover, $(t-1)N_4\cup\frac{1}{2}(n-12t+12)P_2\in \mathbf{SAT}(n, tP_4) $ if $n$ is even and $N^*_4\cup(t-2)N_4\cup\frac{1}{2}(n-12t+11)P_2\in \mathbf{SAT}(n, tP_4)$ if $n$ is odd, where $N_4$ and $N_4^*$ are shown in Figure \ref{N4N4s}.
\end{conjecture}

\begin{figure}[htbp]
 \hspace{0.08\textwidth}
	 \begin{minipage}{0.35\textwidth}
        \centering
        \includegraphics[width=0.8\linewidth]{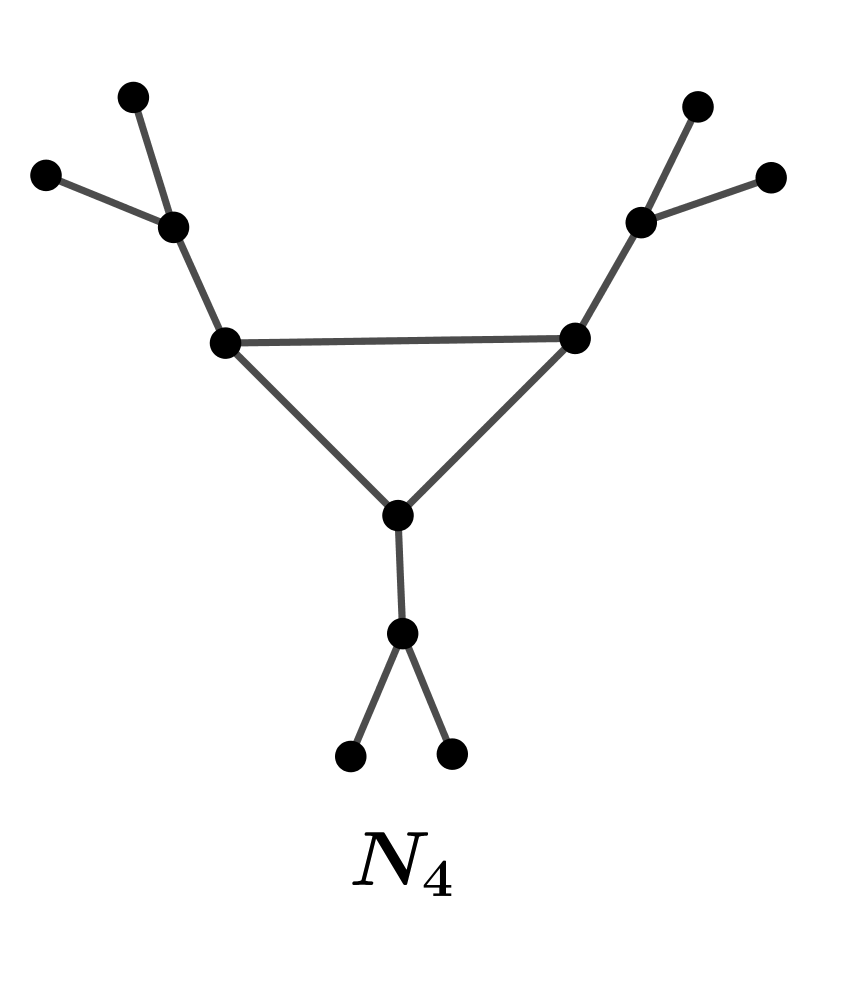} 
    \end{minipage}
    \hspace{0.1\textwidth} 
    \begin{minipage}{0.35\textwidth}
        \centering
        \includegraphics[width=0.8\linewidth]{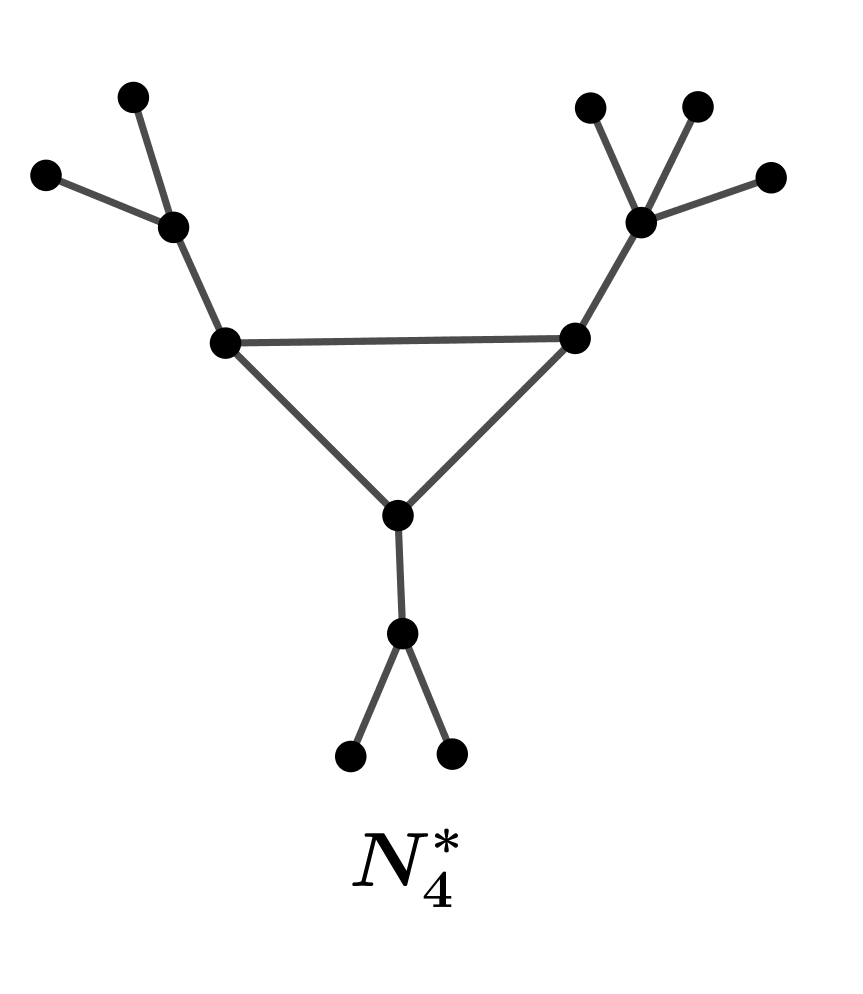} 
    \end{minipage}
     \caption{The graphs $N_4$ and $N_4^*$}
     \label{N4N4s}
\end{figure}

In the same paper, they also confirmed Conjecture \ref{conjtpk} when $t=2$ and $n\ge 12$ (see \cite{CFFGJM2015}, Theorem 4.4). Later,
The authors of \cite{CLYZ2023} proved that if $t\ge 3$ and $n\ge 6t+4$, then $sat(n,tP_4)<\frac{n+12t-12}{2} $, which shows that $(t-1)N_4\cup\frac{1}{2}(n-12t+12)P_2\notin \mathbf{SAT}(n, tP_4)$. In this paper, we determined the sharp lower bound on the spectral radius of $tP_4$-saturated graphs. The result demonstrates that $(t-1)N_4\cup\frac{1}{2}(n-12t+12)P_2\in \mathbf{SAT_{sp}}(n, tP_4)$ when $t\geq 2$,  and $\mathbf{SAT}(n, tP_4)\cap \mathbf{SAT_{sp}}(n, tP_4)=\emptyset$ when $t\ge 3$ and $n\ge 6t+4$. 

\begin{theorem}\label{tp4spec}
Let $t\ge 2$ and $G$ be a $tP_4$-saturated graph on $n$ vertices with $n\ge 4t$. Then $$\rho(G)\ge \rho(N_4)=\frac{1+\sqrt{17}}{2}$$ with equality if and only if $G\cong (t-1)N_4\cup Z$, where $Z\in \big\{\bigcup_{i=2}^{3}x_iK_i\cup (\bigcup_{i=4}^{7} x_iK_{1,i-1}): x_i\ge 0$ and $\sum_{i=2}^{7} i x_i=n-12t+12\big\}$. 
\end{theorem}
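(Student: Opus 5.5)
The plan is to combine a structural analysis of $tP_4$-saturated graphs with spectral monotonicity: $\rho(H)\le\rho(G)$ for every subgraph $H$ of $G$, with strict inequality when $G$ is connected and $H$ is a proper subgraph. I also note at the outset that $\rho(N_4)=\frac{1+\sqrt{17}}{2}$ is the largest root of $x^2-x-4$. This can be verified from an equitable partition of $N_4$ via its quotient matrix, and the same quotient-matrix technique will be used for every comparison graph below.

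\textbf{Step 1 (global structure).} Let $\nu(G)$ be the maximum number of vertex-disjoint copies of $P_4$ in $G$. Since $G$ is $tP_4$-free, $\nu(G)\le t-1$. Since $n\ge 4t$, $G$ is not complete, so a missing edge $e$ exists, and the copy of $tP_4$ in $G+e$ has $t-1$ paths avoiding $e$; hence $\nu(G)=t-1$. A connected $P_4$-free graph is a star $K_{1,s}$ or $K_3$, so every component of $G$ is either such a graph or contains a $P_4$. Saturation then restricts these components. Adding an edge between two leaves of a star, or between two $P_4$-free components, must create a new $P_4$ that raises the packing number to $t$. I expect this to force every $P_4$-free component to be $K_2$, $K_3$ or $K_{1,s}$ with $s\le 6$, and it may also forbid certain combinations. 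Each such component has spectral radius at most $\sqrt6<\rho(N_4)$, which explains the shape of $Z$.

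\textbf{Step 2 (key lemma).} The central claim is that, in a $tP_4$-saturated graph with $t\ge2$, some component $H$ contains a $P_4$ and satisfies $\rho(H)\ge\rho(N_4)$, with equality only if $H\cong N_4$. To prove it, I fix a maximum $P_4$-packing $\mathcal P$ and consider a component $H$ meeting $\mathcal P$. I then add missing edges $e$ inside $H$, or from $H$ to leaves of other components, and exploit that each $G+e$ contains $tP_4$. This produces a list of forced local configurations: high-degree vertices, short cycles through a path of $\mathcal P$, and pendant stars attached to path vertices. Each configuration is a small graph whose spectral radius I will compute, or bound below by quotient matrices, and compare with $\rho(N_4)$. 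Any vertex of degree at least $7$ already gives $\rho\ge\sqrt7>\rho(N_4)$, so I may assume all degrees are at most $6$. In the remaining bounded situation, the saturation constraints should pin $H$ down: either $H$ properly contains one of finitely many graphs with spectral radius at least $\rho(N_4)$, or $H\cong N_4$.

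\textbf{Step 3 (equality).} If $\rho(G)=\rho(N_4)$, every $P_4$-containing component must be isomorphic to $N_4$, because any other such component has strictly larger spectral radius by Step 2 and strict monotonicity. Each copy of $N_4$ contributes exactly one path to a maximum packing, so there are exactly $t-1$ copies, occupying $12(t-1)$ vertices. The remaining components are the $P_4$-free ones from Step 1, which gives $Z$ with $\sum_i i x_i=n-12t+12$. For the converse, I will check directly that $(t-1)N_4\cup Z$ is $tP_4$-saturated: adding an edge inside $N_4$, inside $Z$, or between components creates an extra disjoint $P_4$.

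\textbf{Main obstacle.} The hardest part is the case analysis in Step 2. The difficulty is to organize the forced configurations so that the list of small comparison graphs is finite and each one can be checked against $x^2-x-4$. I would first try to show that some vertex of a packed path has at least three pendant-type neighbours, or lies on a triangle with an extra pendant; this should be the pattern whose quotient matrix gives exactly $x^2-x-4$.
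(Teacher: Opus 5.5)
There is a genuine gap, and it sits in Step 2, which carries essentially the whole theorem. Your plan is to show that saturation forces every $P_4$-containing component $H\not\cong N_4$ to properly contain one of finitely many small graphs with spectral radius at least $\rho(N_4)$. The proposal gives no support for this finiteness, and there is a structural reason to doubt that local case analysis can deliver it.

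First, the minimal graphs with $\rho\ge\rho(N_4)$ form an infinite family. Take the cycle $C_k$ with a cherry (a $K_{1,2}$ attached by its centre) hung at every vertex; $N_4$ is the case $k=3$. This graph has the same equitable quotient matrix as $N_4$, so its spectral radius is exactly $\frac{1+\sqrt{17}}{2}$. Yet every subgraph on fewer than $k$ vertices is a forest with strictly smaller spectral radius.

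Second, the constraints you can actually extract from saturation are local, and they permit configurations whose neighbourhoods have spectral radius well below $\rho(N_4)$. For example, take a leaf whose neighbour has degree $3$ and lies in a triangle with two further vertices of degree $3$ having distinct outside neighbours; this spans a net, with $\rho=1+\sqrt2$. At such vertices monotonicity yields nothing. A finite obstruction list could only suffice if saturation itself excluded all such long-range structures, and that is exactly what is left unproved. Your heuristic for where $x^2-x-4$ arises is also off: $K_{1,3}$ and the paw have $\rho=\sqrt3$ and about $2.17$. Among small graphs, $\frac{1+\sqrt{17}}{2}$ is attained by $K_4-e$ and the bowtie ($F_1,F_2$), and inside $N_4$ only by the whole graph.

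The missing idea is a local-to-global inequality, which the paper supplies in Theorem~\ref{tp4suffcondi}. Set $p(x)=x^3-2x^2-3x+4=(x-1)(x^2-x-4)$. Lemma~\ref{lemrho-p} gives $p(\rho)\ge\min_v s_v(p(A))=\min_v F(v)$, where $F(v)$ is the row sum of $p(A)$ at $v$, computed from walk counts of length at most $3$. Since $\rho>1$, $F\ge0$ everywhere yields $\rho\ge\frac{1+\sqrt{17}}{2}$, with equality only if $F\equiv0$. Lemma~\ref{contp4} then verifies $F(v)\ge0$ vertex by vertex. It uses $F_1,\dots,F_{20}$ only to exclude local configurations in which $F(v)$ would be negative. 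Vertices like the net example above simply contribute positive slack, and the equality conditions $F\equiv0$ force $N_4$.

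Some smaller points:
\begin{itemize}
\item Saturation does not bound star components. The bound $s\le6$ comes from $\sqrt s\le\rho(N_4)$, and only in the equality case.
\item To obtain exactly $Z$ you must exclude $K_{1,2}$, since the leaf--leaf edge creates only a $K_3$.
\item You must also exclude $K_1$: joining it to a non-triangle degree-$3$ vertex of a copy of $N_4$ creates no new disjoint $P_4$.
\item Step 3 needs the key lemma for \emph{every} $P_4$-containing component, not just some. This follows because each such component $C$ is itself $t_1P_4$-saturated with $t_1=\nu(C)+1\ge2$, hence either a clique (so $\rho\ge3$) or of order at least $4t_1$.
\end{itemize}
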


For $t=1$, observe that each component of $G$ is a star or a triangle. When $n$ is even, then $\rho(G)\ge 1$ with equality if and only if $G\cong \frac{n}{2} K_2$. When $n$ is odd, there exists an odd component $G_0$ such that $G_0\cong K_3$ or $K_{1,4}\subseteq G_0$. In this case, $\rho(G)\ge 2$ with equality if and only if $G\in \big\{\bigcup_{i\in [3]} x_iK_i\cup \bigcup_{j\in \{3,4\}} y_jK_{1,j}: \sum_{i\in [3]}ix_i+\sum_{j\in \{3,4\}}(j+1)y_j=n,  x_1\le 1,$  and $ x_1(x_2+ y_3+y_4)=0\big\}$. Hence, $\mathbf{SAT}(n, P_4)\subseteq \mathbf{SAT_{sp}}(n, P_4)$.

To prove Theorem \ref{tp4spec}, we derive a sufficient condition for graphs satisfying $\rho(G)\ge \rho(N_4)$. Let $F(v):= d^2(v)+\sum_{w\in N(v)\cup N_2(v)}\big(|N(w)\cap N(v)|\cdot d(w)\big)-\sum_{w\in N(v)}2d(w)-3d(v)+4$. 

\begin{theorem}\label{tp4suffcondi}
If $G$ is an $n$-vertex connected graph with $n\geq 3$ such that $ F(v)\ge0$ for each vertex $v \in V(G)$, then $\rho(G)\ge \rho(N_4)$ with equality if and only if $F(v)=0$ for each $v\in V(G)$. 
\end{theorem}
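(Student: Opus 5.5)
The plan is to recognize $F(v)$ as the $v$-th entry of a cubic polynomial in $A=A(G)$ applied to the all-ones vector $\mathbf{1}$, and then test that vector against the Perron vector of $G$.

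\textbf{Step 1: $F$ as a matrix polynomial.} First, $(A\mathbf{1})_v=d(v)$ and $(A^2\mathbf{1})_v=\sum_{w\in N(v)}d(w)$. Next,
$$(A^3\mathbf{1})_v=\sum_{u\in N(v)}\sum_{w\in N(u)}d(w)=\sum_{w}|N(w)\cap N(v)|\,d(w).$$
In this sum $w$ ranges over $\{v\}\cup N(v)\cup N_2(v)$, and the term $w=v$ contributes $d(v)\cdot d(v)=d^2(v)$. Hence
$$F(v)=\big((A^3-2A^2-3A+4I)\mathbf{1}\big)_v .$$
The polynomial $p(x)=x^3-2x^2-3x+4$ factors as $p(x)=(x-1)(x^2-x-4)$. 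The largest root of $x^2-x-4$ is $(1+\sqrt{17})/2=\rho(N_4)$, which is consistent with $N_4$ being the extremal graph.

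\textbf{Step 2: pairing with the Perron vector.} Since $G$ is connected, Perron--Frobenius gives a positive unit eigenvector $\mathbf{x}$ with $A\mathbf{x}=\rho\mathbf{x}$, where $\rho=\rho(G)$. Because $A$ is symmetric,
$$\sum_v x_vF(v)=\mathbf{x}^{T}p(A)\mathbf{1}=p(\rho)\,\mathbf{x}^{T}\mathbf{1}=(\rho-1)(\rho^2-\rho-4)\sum_v x_v.$$
The hypothesis $F\ge 0$ together with $x_v>0$ makes the left side nonnegative. As $\sum_v x_v>0$, we get $(\rho-1)(\rho^2-\rho-4)\ge 0$.

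\textbf{Step 3: ruling out small $\rho$.} A connected graph on $n\ge 3$ vertices contains $P_3$, so $\rho\ge\sqrt2>1$. Dividing by $\rho-1>0$ gives $\rho^2-\rho-4\ge 0$. Since $\rho>0$, this forces $\rho\ge (1+\sqrt{17})/2=\rho(N_4)$.

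\textbf{Step 4: the equality case.} Suppose $\rho=\rho(N_4)$. Then $p(\rho)=0$, so $\sum_v x_vF(v)=0$ with every term nonnegative and every $x_v>0$; hence $F(v)=0$ for all $v$. Conversely, suppose $F\equiv 0$. Then $p(\rho)\sum_v x_v=0$, so $p(\rho)=0$. With $\rho>1$ this gives $\rho^2-\rho-4=0$, and positivity of $\rho$ selects the root $(1+\sqrt{17})/2$.

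The only step needing care is the identity in Step 1: correctly accounting for the $w=v$ term giving $d^2(v)$, and noting that only $w\in\{v\}\cup N(v)\cup N_2(v)$ have $N(w)\cap N(v)\neq\emptyset$. Once $F=p(A)\mathbf{1}$ is established, the rest is the standard Perron-vector argument.
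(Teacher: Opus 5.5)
Your proof is correct and follows the paper's argument. It uses the same polynomial $p(x)=x^3-2x^2-3x+4$, the same walk-counting identity $F(v)=(p(A)\mathbf{1})_v$ (the paper's Claim~\ref{A3}), and the same use of $P_3\subseteq G$ to rule out $\rho\le 1$; the one difference is that you inline the Perron-vector pairing $\mathbf{x}^{T}p(A)\mathbf{1}=p(\rho)\,\mathbf{x}^{T}\mathbf{1}$ instead of citing the row-sum lemma (Lemma~\ref{lemrho-p}) that this pairing proves, which also makes the equality case explicit where the paper leaves it implicit.
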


The paper is organized as follows: In Section \ref{tp3prop}, we introduce several notions and properties of $tP_{4}$-saturated graphs. Section \ref{toolpf} presents the proof of Theorem \ref{tp4suffcondi}. Building on this theorem, Section \ref{mainpf} provides the proof of Theorem \ref{tp4spec}.
 
\section{Preliminaries}\label{tp3prop}

 In this section, we introduce some definitions and known properties of $tP_4$-saturated graphs, and then establish some new ones. 
 
 For any $v\in V(G)$, use $d(v)$ and $N(v)$ to denote the degree and neighborhood of $v$ in $G$, respectively. Let $N[v]= N(v)\cup \{v\}$ and $N_2(v)=(\bigcup_{u\in N(v)}N(u))\setminus N[v]$. 
 For $A\subseteq V(G)$, denote by $G[A]$ the subgraph of $G$ induced by $A$.
 For any $e\in E(G)$ and $G_0\subseteq G$, we denote $G+e$ and $G-G_0$ the graphs obtained from $G$ by adding $e$ and deleting the vertices of $G_0$ together with their incident edges, respectively. Let $[k]=\{1,2,\ldots,k\}$.

\begin{lemma}\rm{(\cite{B2014}, Theorem 6.8)}\label{subgraphrho}
	Let $G$ be a connected graph and $H\subsetneq G$. Then $\rho(G)>\rho(H)$. 
\end{lemma}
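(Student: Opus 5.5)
The plan is the standard Perron--Frobenius comparison argument. Write $V(G)=\{v_1,\dots,v_n\}$. Since $H\subsetneq G$, I first pad $H$ with isolated vertices so that it becomes a spanning subgraph $H'$ of $G$. This changes neither the spectral radius nor the nonzero part of the spectrum, so $\rho(H')=\rho(H)$. Now $A(H')$ and $A(G)$ are both $n\times n$ symmetric nonnegative matrices with $A(H')\le A(G)$ entrywise. The matrix $D:=A(G)-A(H')$ is nonzero. Indeed, either $H$ misses an edge of $G$, or $H$ misses a vertex $v$ of $G$; in the latter case, connectivity of $G$ (with $n\ge 2$) gives an edge of $G$ at $v$ that is absent from $H'$. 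The degenerate case $n=1$ is vacuous, since $K_1$ has no proper nonempty subgraph.

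Next I use the Rayleigh quotient. By Perron--Frobenius for nonnegative symmetric matrices, there is a unit vector $\mathbf{x}\ge 0$ with $A(H')\mathbf{x}=\rho(H)\mathbf{x}$. Then
\[
\rho(H)=\mathbf{x}^{T}A(H')\mathbf{x}\le \mathbf{x}^{T}A(H')\mathbf{x}+\mathbf{x}^{T}D\mathbf{x}=\mathbf{x}^{T}A(G)\mathbf{x}\le \rho(G).
\]
Both inequalities hold: the first because $D\ge 0$ and $\mathbf{x}\ge 0$, and the second by the Rayleigh principle for the symmetric matrix $A(G)$.

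The only real step is strictness. Suppose $\rho(H)=\rho(G)$. Then equality holds in the Rayleigh bound, so $\mathbf{x}$ is an eigenvector of $A(G)$ for $\rho(G)$. Since $G$ is connected, $A(G)$ is irreducible, and Perron--Frobenius then says every nonnegative eigenvector for $\rho(G)$ is strictly positive; hence $\mathbf{x}>0$. Equality in the first inequality forces $\mathbf{x}^{T}D\mathbf{x}=0$. However, $D$ has some entry $D_{ij}=D_{ji}=1$, which gives $\mathbf{x}^{T}D\mathbf{x}\ge 2x_ix_j>0$, a contradiction. Therefore $\rho(G)>\rho(H)$.

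I expect no serious obstacle. The point needing care is the positivity of $\mathbf{x}$: it must come from the irreducibility of $A(G)$, not of $A(H)$, since $H$ may be disconnected. Also, the nonzero entry of $D$ must be located correctly in the case where $H$ differs from $G$ only by deleted vertices.
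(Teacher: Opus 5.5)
Your proof is correct and complete: padding $H$ to a spanning subgraph, the sandwich $\rho(H)=\mathbf{x}^{T}A(H')\mathbf{x}\le \mathbf{x}^{T}A(G)\mathbf{x}\le\rho(G)$, and the strictness step via positivity of a nonnegative $\rho(G)$-eigenvector of the irreducible matrix $A(G)$ all work, including when $H$ differs from $G$ only by deleted vertices. The paper gives no proof of its own and simply cites this as a standard textbook fact; your argument is the usual Perron--Frobenius monotonicity proof behind that citation, specialized to symmetric matrices via the Rayleigh quotient.
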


An \textit{internal path} of $G$ is a path 
(or cycle) with vertices $v_1,v_2,\ldots, v_k$ (or $v_1=v_k$) such that $d(v_1)\geq 3$, $d(v_k)\geq 3$ and $d(v_2)=\cdots= d(v_{k-1})=2$.

\begin{lemma}\rm{(\cite{CRS2010}, Theorem 8.1.12)}\label{subdividingrho}
	Let $G$ be a connected graph with $n$ vertices, and let $G'$ be the graph obtained from $G$ by subdividing an edge on an internal path of $G$. Then $\rho(G')\leq \rho(G)$. 
\end{lemma}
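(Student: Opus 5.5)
The plan is to compare Rayleigh quotients. Take the Perron vector of $G'$ and turn it into a test vector for $G$ by deleting one subdivision vertex. Let $\rho'=\rho(G')$ and let $x>0$ be the Perron vector of $A(G')$. After the subdivision, the internal path becomes $w_0,w_1,\ldots,w_m$ in $G'$, with $m\ge 2$, $d(w_0),d(w_m)\ge 3$, and all interior vertices of degree $2$; in the cycle case $w_0=w_m$. Contracting any edge of this path recovers $G$. Hence, for every interior index $1\le i\le m-1$, the graph $G$ is isomorphic to the graph obtained from $G'$ by deleting $w_i$ and joining $w_{i-1}$ to $w_{i+1}$.

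First we need $\rho'\ge 2$. A connected graph containing an internal path (or cycle) with both ends of degree $\ge 3$ contains, as a subgraph, a cycle or one of the extended Dynkin trees $\widetilde D_k$, and these have spectral radius exactly $2$. Lemma \ref{subgraphrho} then gives $\rho'\ge 2$. Consequently, along the interior of the path the eigen-equations $\rho' x_{w_i}=x_{w_{i-1}}+x_{w_{i+1}}$ give $x_{w_{i-1}}+x_{w_{i+1}}\ge 2x_{w_i}$, so $(x_{w_i})$ is a discrete convex sequence on $0\le i\le m$.

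For a fixed interior index $i$, define $y$ on $V(G)$ by restricting $x$ to $V(G')\setminus\{w_i\}$. A direct computation gives
\[ y^{T}A(G)y = x^{T}A(G')x-2x_{w_i}(x_{w_{i-1}}+x_{w_{i+1}})+2x_{w_{i-1}}x_{w_{i+1}}, \qquad \|y\|^2=\|x\|^2-x_{w_i}^2 .\]
Using $x_{w_{i-1}}+x_{w_{i+1}}=\rho' x_{w_i}$, the inequality $y^TA(G)y\ge \rho'\|y\|^2$ reduces to
\[ 2x_{w_{i-1}}x_{w_{i+1}}\ge x_{w_i}(x_{w_{i-1}}+x_{w_{i+1}}). \]
This holds whenever $x_{w_i}\le \min\{x_{w_{i-1}},x_{w_{i+1}}\}$. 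Then Rayleigh's principle yields $\rho(G)\ge \rho'$, which is the claim.

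It therefore remains to find an interior index at which $x$ is a local minimum along the path, and this is the main obstacle. By convexity, the minimum of $x$ over $\{w_0,\ldots,w_m\}$ is attained either at an interior vertex, in which case we are done, or at an endpoint. In the latter case the sequence is monotone from that endpoint, and the deletion argument above does not apply directly. I would first try to exploit the eigen-equation at the endpoint $w_0$, which has degree $\ge 3$. The sum of its neighbours' entries equals $\rho' x_{w_0}$, and the off-path neighbours carry entries that should force $x_{w_0}\ge x_{w_1}$. If that fails, I would use a shifted test vector instead. Define $y_{w_j}=x_{w_{j+1}}$ for $j<i$ on the short side, and delete one vertex at the far end. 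Then compare $y^{T}A(G)y$ with $\rho'\|y\|^2$ using the explicit solution $x_{w_j}=ar^j+br^{-j}$, where $r+r^{-1}=\rho'$. This is the classical Hoffman--Smith route, and I expect the inequality bookkeeping at the endpoints to be the only delicate part. The case $\rho'=2$ needs a separate check with $x_{w_j}$ linear in $j$, but the same inequalities go through with equality allowed.
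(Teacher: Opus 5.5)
The steps you carry out are correct: $\rho'\ge 2$, convexity of $(x_{w_j})$ along the path, and the deletion identity. Together they give $\rho(G)\ge\rho'$ whenever $x$ has a local minimum at an interior vertex $w_i$. This also covers the cycle case $w_0=w_m$, where convexity with $x_{w_0}=x_{w_m}$ forces an interior minimum.

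The gap is the remaining case: distinct ends with $x$ strictly monotone along the path. That case carries the whole content of the theorem, and it is left open. Your first suggested fix is false. From $d(w_0)\ge 3$ you only get $x_u\ge x_{w_0}/\rho'$ for the off-path neighbours $u$ of $w_0$, hence $x_{w_1}\le(\rho'-2/\rho')x_{w_0}$. If $w_0$ has exactly two pendant neighbours this is an equality, so $x_{w_1}>x_{w_0}$ as soon as $\rho'>2$.

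Concretely, let $G'$ be $K_5$ with a path $w_2w_1w_0$ attached at a vertex $w_2$ of the $K_5$, plus two pendant vertices at $w_0$. Let $G$ be $G'$ with $w_1$ suppressed. Then $m=2$ and $\rho'>4$. Normalising $x_{w_0}=1$ gives $x_{w_1}=\rho'-2/\rho'>2$ and $x_{w_2}=\rho'^2-3$. Hence $x_{w_1}(x_{w_0}+x_{w_2})>2x_{w_0}x_{w_2}$, and your test vector has Rayleigh quotient strictly below $\rho'$. Since $m=2$, this is the only vector on $V(G)$ that agrees with $x$ off the path interior. So no choice of values on the path alone can rescue the argument.

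The monotone case therefore needs a test vector that also changes entries off the path, near the endpoint with small entry, or a different method altogether. It must use $d(w_0),d(w_m)\ge 3$ in an essential way. Your interior-minimum argument never uses this hypothesis beyond $\rho'\ge2$. Without it the statement fails exactly in the monotone regime: subdividing the pendant edge of $K_4$-plus-a-pendant-edge produces a graph containing the original as a proper subgraph, and the Perron vector is monotone along that path.

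Your fallback, the ``shifted test vector'', is not specified well enough to check, and the obvious readings fail:
\begin{itemize}
\item If the shift keeps $x_{w_0}$ at $w_0$, it belongs to the family just excluded.
\item If it moves $x_{w_1}$ onto $w_0$ while the other neighbours of $w_0$ keep their entries, then in the example above the Rayleigh numerator changes by $4-\rho'-8/\rho'^2<0$.
\item In general, $G'$ minus the path interior need not separate $w_0$ from $w_m$, so there is no well-defined ``short side'' to shift.
\end{itemize}

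The paper gives no proof of this lemma; it simply quotes the Hoffman--Smith theorem from Cvetkovi\'c--Rowlinson--Simi\'c. Your argument therefore has to handle the monotone case in full, and as written it does not.
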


\begin{lemma}\rm{(\cite{BEH2010}, Proposition 1.3.6)}\label{comspec}
 The spectrum of a graph is the union of the spectra of its connected components (and multiplicities are added). 
\end{lemma}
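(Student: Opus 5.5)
The plan is to reduce Lemma \ref{comspec} to a standard fact about block-diagonal matrices. Let $G_1,\ldots,G_k$ be the connected components of $G$. First, order the vertices of $G$ so that the vertices of $G_1$ come first, then those of $G_2$, and so on. This reordering corresponds to a permutation matrix $P$, and it changes $A(G)$ into $P^{\top}A(G)P$. Since $P^{\top}=P^{-1}$, this is a similarity transformation, so the spectrum of $A(G)$, including multiplicities, is unchanged.

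Second, observe that in this ordering there are no edges between distinct components. Hence every entry of $P^{\top}A(G)P$ whose row and column lie in different components is zero. The matrix is therefore block diagonal, with diagonal blocks $A(G_1),\ldots,A(G_k)$:
\[
P^{\top}A(G)P=\mathrm{diag}\big(A(G_1),A(G_2),\ldots,A(G_k)\big).
\]

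Third, compute the characteristic polynomial. The determinant of a block-diagonal matrix is the product of the determinants of its blocks. Applying this to $xI-P^{\top}A(G)P$ gives
\[
\det\big(xI-A(G)\big)=\prod_{i=1}^{k}\det\big(xI-A(G_i)\big).
\]
The roots of the left-hand side, counted with multiplicity, are exactly the roots of the individual factors, counted with multiplicity. This is precisely the statement that the spectrum of $G$ is the multiset union of the spectra of its components.

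There is no real obstacle here. The only point needing care is the multiplicities, and these are handled automatically because the argument works with the full characteristic polynomial rather than with eigenvalue sets. Alternatively, since each block is symmetric, one could take an orthonormal eigenbasis of each $A(G_i)$, extend each vector by zeros outside $V(G_i)$, and obtain an orthonormal eigenbasis of $A(G)$. In the sequel, this lemma is used to get $\rho(G)=\max_i\rho(G_i)$, so spectral-radius bounds can be checked component by component.
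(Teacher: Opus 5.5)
Your proof is correct: reordering the vertices so that $A(G)$ becomes block diagonal with blocks $A(G_i)$, and then factoring $\det(xI-A(G))=\prod_i\det(xI-A(G_i))$, is the standard argument, and it handles multiplicities correctly. The paper gives no proof of its own and only cites Brouwer--Haemers, where the fact rests on this same block-diagonal observation.
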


Denote by $s_{i}(A)$ the $i$-th row sum of a matrix $A$, i.e, $s_{i}(A)=\sum_{j=1}^n A_{ij}$.

\begin{lemma}\rm{(\cite{EZ2000}, Lemma 2.2)}\label{lemrho-p}
Let $G$ be an $n$-vertex connected graph and $A(G)=A$ its adjacency matrix, with spectral radius $\rho$. Let $p$ be any polynomial. Then
$$\min_{v\in V(G)}s_v(p(A))\leq p(\rho)\leq \max_{v\in V(G)}s_v(p(A)).$$
Moreover, if the rowsums of $p(A)$ are not all equal, then both inequalities are
strict.
\end{lemma}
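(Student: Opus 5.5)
The plan is to write $p(\rho)$ as a weighted average of the row sums $s_v(p(A))$ with strictly positive weights. Both inequalities, and their strictness, then follow at once. The weights will come from the Perron eigenvector, which is where connectivity is used.

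First, since $G$ is connected, $A$ is a nonnegative irreducible symmetric matrix. By the Perron--Frobenius theorem, $\rho$ is an eigenvalue of $A$ with an eigenvector $\mathbf{x}=(x_v)_{v\in V(G)}$ whose entries are all strictly positive. From $A\mathbf{x}=\rho\mathbf{x}$ we get $A^k\mathbf{x}=\rho^k\mathbf{x}$ for every $k\ge 0$, and by linearity $p(A)\mathbf{x}=p(\rho)\mathbf{x}$ for any polynomial $p$. This step does not require the coefficients of $p$ to be nonnegative.

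Second, let $\mathbf{1}$ be the all-ones vector. Because $A$ is symmetric, $p(A)$ is symmetric as well. Computing $\mathbf{x}^{T}p(A)\mathbf{1}$ in two ways gives
$$\sum_{v\in V(G)} x_v\, s_v(p(A)) \;=\; \mathbf{x}^{T}p(A)\mathbf{1} \;=\; \big(p(A)\mathbf{x}\big)^{T}\mathbf{1} \;=\; p(\rho)\sum_{v\in V(G)} x_v .$$
Dividing by $\sum_v x_v>0$ expresses $p(\rho)$ as a convex combination of the row sums $s_v(p(A))$, with weights $x_v/\sum_u x_u$.

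Third, a convex combination lies between the minimum and the maximum of the averaged quantities, which gives both inequalities. Every weight is strictly positive. So if the row sums are not all equal, the combination puts positive mass on a value strictly above the minimum and on a value strictly below the maximum, and both inequalities are strict.

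The only real point to check is the strict positivity of $\mathbf{x}$, which is where connectedness of $G$ enters. For a disconnected graph the Perron vector may vanish on some components, and the strictness claim can fail. Everything else is a direct computation.
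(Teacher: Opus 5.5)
Your proof is correct. Writing $p(\rho)$ as the average of the row sums $s_v(p(A))$, weighted by the strictly positive Perron vector through $\mathbf{x}^{T}p(A)\mathbf{1}=p(\rho)\,\mathbf{x}^{T}\mathbf{1}$, gives both inequalities and their strictness; the paper does not prove this lemma but cites it from Ellingham and Zha, whose argument is essentially this Perron-vector averaging.
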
  
 
\begin{lemma}\rm{(\cite{CFFGJM2015}, Lemma 2.1 and Lemma 2.3)}\label{deg=1}
For any integer $t\geq 2$, let $G$ be a $tP_{4}$-saturated graph.  
\begin{enumerate}
  \item If $w\in V(G)$ with $d(w)=2$, then $w$ lies in a triangle.
 \item The graph $G$ is not a tree.
 \end{enumerate}
\end{lemma}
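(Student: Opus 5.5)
The plan is to prove the two parts separately. Throughout I use one basic device: $G$ is $tP_4$-free, and for any non-edge $e$ the graph $G+e$ contains a copy $\mathcal{T}$ of $tP_4$, which must use $e$. Let $P$ be the component path of $\mathcal{T}$ that contains $e$. In each case I will reroute $P$ inside $G$, avoiding the other $t-1$ paths of $\mathcal{T}$, to get a $P_4$ in $G$. Together with those $t-1$ paths this gives a $tP_4$ in $G$, a contradiction.

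\textbf{Part 1.} Let $d(w)=2$ with $N(w)=\{u,v\}$, and suppose $uv\notin E(G)$. Add $e=uv$ and let $P$ and $\mathcal{T}$ be as above, so $u,v\in V(P)$.
\begin{itemize}
\item \emph{$w$ lies in another path $Q$ of $\mathcal{T}$.} This is impossible: both neighbours of $w$ lie in $P$, so $w$ would be isolated in $Q$.
\item \emph{$w\notin V(\mathcal{T})$.} Replace the edge $uv$ of $P$ by $u\,w\,v$. This gives a path on five vertices in $G$, disjoint from the rest of $\mathcal{T}$. Deleting an end vertex leaves a $P_4$ in $G$.
\item \emph{$w\in V(P)$.} Then $P$ has four vertices, including $u$, $v$ and $w$, and the only edges at $w$ go to $u$ and $v$. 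A direct check of the possible positions settles this case. For example, $P=w\,u\,v\,x$ becomes $u\,w\,v\,x$, and $P=a\,u\,v\,w$ becomes $a\,u\,w\,v$. Each rerouted path uses only edges of $G$.
\end{itemize}
In every case $G$ contains $tP_4$, a contradiction. Hence $uv\in E(G)$, and $w$ lies in the triangle $uvw$.

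\textbf{Part 2.} Suppose $G$ is a tree. If $G$ is a star, adding an edge between two leaves creates only a triangle with pendant edges. That graph has no two disjoint $P_4$'s, which contradicts saturation since $t\ge 2$. So $G$ is not a star.

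By Part 1 no vertex has degree $2$, since a tree has no triangles. Take a longest path $x_0x_1\cdots x_m$. Every neighbour of $x_1$ other than $x_2$ is a leaf, and since $d(x_1)\ge 3$, $x_1$ has a second leaf neighbour $x_0'$. Add $e=x_0x_0'$. Because $x_0$ and $x_0'$ are leaves attached only to $x_1$, the path $P$ containing $e$ has one of the forms
\[
x_0'\,x_0\,x_1\,c, \qquad c\,x_1\,x_0\,x_0', \qquad\text{or the same with the roles of } x_0,x_0' \text{ swapped},
\]
where $c\in N(x_1)$. I then try to replace $P$ by a $P_4$ in $G$ of the form $y\,x_1\,c\,d$, or $x_0\,x_1\,c\,d$ with $d\in N(c)\setminus\{x_1\}$, chosen to avoid the other paths of $\mathcal{T}$. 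This uses that $c$ (for instance $c=x_2$) has degree at least $3$ when it is not a leaf, and that $x_1$ carries spare leaves.

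\textbf{Main obstacle.} The hard step is Part 2 when the candidate vertices $d$ are all covered by other paths of $\mathcal{T}$, or when $c$ is itself a leaf. My first attempt there is an exchange argument: choose $\mathcal{T}$ to minimise its intersection with $N(x_2)$, and modify the path $Q\in\mathcal{T}$ that occupies $d$ by swapping in a spare leaf. If that fails, I would instead add the edge $x_0x_2$. I may need to split into cases according to whether $x_2$ has leaf neighbours.
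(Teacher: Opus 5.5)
Your Part 1 and the star case of Part 2 are correct. The main case of Part 2 is not proved, though: you name the obstacle and list remedies without carrying any of them out, and the first remedy cannot work.

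\textbf{Why the edge $x_0x_0'$ fails.} Any argument that uses only the non-edge $x_0x_0'$ (together with $tP_4$-freeness and the absence of degree-$2$ vertices) is doomed. The bad configuration you describe, with $c$ a leaf of $x_1$ and $x_2$ occupied by another path, really occurs. Take $t=2$ and let $G$ be the tree with a centre $z$ adjacent to $y_1,y_2,y_3$, where each $y_i$ carries three pendant leaves $p_i,q_i,r_i$. Since $G-z\cong 3K_{1,3}$, every $P_4$ of $G$ passes through $z$. So $G$ is $2P_4$-free and has no vertex of degree $2$. A longest path is $p_1y_1zy_2p_2$, so $x_1=y_1$ and $x_2=z$, and you may take $x_0=p_1$, $x_0'=q_1$. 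Adding $p_1q_1$ creates the disjoint paths $q_1p_1y_1r_1$ and $p_2y_2zy_3$. Thus $G+x_0x_0'$ can contain $tP_4$ while $G$ does not, and no rerouting or exchange argument can extract a contradiction from this non-edge. The contradiction must come from a different non-edge.

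\textbf{The missing idea.} Use a cut-vertex argument on the non-edge $x_0x_2$, which you mention only as a fallback.
\begin{enumerate}
\item Since $G$ is not a star, $m\ge 3$, so $x_3$ exists. Let $S$ consist of $x_2$ together with all components of $G-x_2$ that do not contain $x_3$.
\item By maximality of the path, each such component is a star: a leaf of $x_2$, or a neighbour of $x_2$ with its pendant leaves.
\item Because $(G+x_0x_2)-x_2=G-x_2$, every $P_4$ of $G+x_0x_2$ that meets $S$ contains $x_2$.
\item A copy of $tP_4$ in $G+x_0x_2$ must use the edge $x_0x_2$. The path through that edge contains $x_2$, so the other $t-1$ paths avoid $S$ entirely.
\item Replace the path through $x_0x_2$ by $x_0x_1x_2z$, where $z\in N(x_2)\setminus\{x_1,x_3\}$ exists because $d(x_2)\ge 3$ by Part 1. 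This path lies in $S$, so together with the other $t-1$ paths it gives $tP_4$ in $G$, a contradiction.
\end{enumerate}
No case analysis is needed.

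Finally, Part 2 needs $G$ to have a non-edge. $K_1$ and $K_2$ are trees that are vacuously $tP_4$-saturated; this is harmless only because the paper applies the lemma with $n\ge 4t$.
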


The following lemma provides some properties of $tP_{4}$-saturated graphs concerning vertices of degree at most $3$.

\begin{lemma}\label{lemdeg-eq}
    For any integer $t\geq 2$, let $G$ be a $tP_{4}${-}saturated connected graph. Let $u,w\in V(G)$ and $N(u)=\{u_1,\ldots,u_{d(u)}\}$, $N(w)=\{w_1,\ldots,w_{d(w)}\}$.
    \begin{enumerate}
        \item If $d(u)=d(w)=1$, $uw\notin E(G)$ and $u_1\ne w_1$, then $u_{1}w_{1}\notin E(G)$.  
        \item If $d(u)=2$ and $d(w)=3$, then $uw\notin E(G)$.  
    \end{enumerate}
\end{lemma}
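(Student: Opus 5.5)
The plan for both parts is the same. Assume the forbidden adjacency exists. Choose a suitable non-edge $e$ of $G$. By saturation, $G+e$ contains a copy $\mathcal{T}$ of $tP_4$, and $e$ must lie on one of its paths, call it $P$, since $G$ itself is $tP_4$-free. Then reroute $P$, and possibly one other path $Q$ of $\mathcal{T}$ that meets the relevant vertices, inside $G$ using the assumed edge. This produces $t$ vertex-disjoint copies of $P_4$ in $G$, which is a contradiction. The freedom I intend to exploit is that only a few vertices near $u$ and $w$ have small degree. So in $G+e$ the path $P$ through $e$ has very few possible shapes, and the other paths of $\mathcal{T}$ can touch only a bounded set of vertices.

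For part 1, suppose $u_1w_1\in E(G)$, and add $e=uw$. In $G+uw$ both $u$ and $w$ have degree $2$. So $P$ is one of three types. The first is $u_1uww_1$; here $uu_1w_1w$ is a $P_4$ in $G$ on the same vertex set, and we are done. The second is $uww_1x$ with $x\neq u$ (or the symmetric path $wuu_1x$). The third is $yu_1uw$, i.e.\ $u$ is interior and $w$ is an end (or the symmetric case). In the second type, if $x=u_1$, replace $P$ by $uu_1w_1w$. If $u_1\notin V(\mathcal{T})$, replace $P$ by $wu_1\ldots$; more precisely, use $w w_1 u_1 u$, which works because $w_1u_1\in E(G)$. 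The only remaining subcase is when $u_1$ lies on another path $Q$ of $\mathcal{T}$. Here I would not fight with $Q$ directly. Instead I would add a different non-edge, say $uw_1$, which is missing because $d(u)=1$ and $u_1\neq w_1$. In $G+uw_1$ the vertex $u$ has neighbours $u_1,w_1$, and the $P_4$ through $uw_1$ can be rerouted through the edge $u_1w_1$ or through the pendant vertex $w$ hanging at $w_1$. I expect a short case analysis on whether $u_1$ and $w$ are used by $\mathcal{T}$ to close every case. For example, $u$ in the middle of $P=u_1uw_1z$ becomes $uu_1w_1z$ in $G$. If $u$ is an end, as in $P=uw_1ab$, then swapping $u$ for $w$ gives $ww_1ab$, and $w$ is unused in $\mathcal{T}$ except possibly as an end attached to $w_1$, which is already taken. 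The third type is handled symmetrically by the same swaps.

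For part 2, suppose $uw\in E(G)$ with $d(u)=2$ and $d(w)=3$. By Lemma \ref{deg=1}(1), $u$ lies in a triangle, so $N(u)=\{w,x\}$. Either $wx\in E(G)$, or $x$ and the other neighbour of $u$ are adjacent; since $N(u)=\{w,x\}$, the triangle must be $uwx$. So $N(w)=\{u,x,y\}$. I will add a non-edge at $u$, namely $uz$ for some $z\notin\{w,x\}$; this is possible since $G$ is not complete, because $n\ge 4t\ge 8$ and saturation rules out a complete graph. In $G+uz$, the path through $uz$ uses at most one of $w,x$ as the other neighbour of $u$ on the path. The triangle $uwx$ lets me substitute $u\leftrightarrow w$ or reroute along $x$, and I will use $d(w)=3$ to bound where $y$ can be. I expect to choose $z$ as a neighbour of $y$ or of $x$ so that the rerouting closes up.

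The main obstacle in both parts is the subcase where a second path $Q$ of $\mathcal{T}$ passes through the low-degree neighbourhood ($u_1$ in part 1, or $x,y$ in part 2). There a simple local swap does not free enough vertices. My first attempt is to pick the added non-edge so that $P$ is forced to contain the whole relevant neighbourhood. If that fails, I will do a joint rerouting of $P\cup Q$ on their $8$ vertices, exhibiting two disjoint $P_4$'s in $G$ on those vertices.
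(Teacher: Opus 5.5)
Part 2 of your proposal has a genuine gap. You never fix which non-edge $uz$ to add, and the choices you lean towards ($z$ a neighbour of $x$ or of $y$) do not close.

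Take $z\in N(x)\setminus\{u,w,y\}$. Nothing in your argument rules out that the $tP_4$ $\mathcal{T}$ in $G+uz$ has $P=uzxb$. Then $w$, whose neighbours are $u,x,y$, can lie on another path only as an end next to $y$, say $Q=wycd$. On these eight vertices the edges you know are the triangle $uwx$, the pendant edges $xz,xb$ and the path $wycd$. They contain no $2P_4$, since $z$ and $b$ would both have to be ends of paths through $x$. So neither a local swap nor your joint rerouting of $P\cup Q$ gives $t$ disjoint copies of $P_4$ in $G$. The choice $z\in N(y)$ fails the same way, with $P=uzyc$ and $Q=wxde$.

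The missing idea is to add $uy$, where $y$ is the third neighbour of $w$; this is the paper's choice. It is a non-edge simply because $N(u)=\{w,x\}$, so you need no appeal to $n\ge 4t$, which is not a hypothesis of the lemma. In $G+uy$, the vertices $u$ and $w$ have the same closed neighbourhood $\{u,w,x,y\}$. Hence the transposition of $u$ and $w$ is an automorphism of $G+uy$ that interchanges the edges $uy$ and $wy$.

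\begin{itemize}
\item If $\mathcal{T}$ does not use $wy$, its image is a $tP_4$ avoiding $uy$, i.e.\ lying in $G$.
\item If $\mathcal{T}$ uses both $uy$ and $wy$, the path containing them is $xuyw$ or $uywx$. Then $xuwy$ is a $P_4$ of $G$ on the same vertex set.
\end{itemize}

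The paper reaches the same conclusion with the same added edge by a case analysis on whether $x$ and $w$ lie on paths of $\mathcal{T}$ other than the one through $uy$. The twin observation packages that analysis in two lines.

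In part 1, your analysis of $G+uw$ is unnecessary. The non-edge $uw_1$ you eventually switch to settles the statement on its own, and that is exactly the paper's proof. Your sample cases for $uw_1$ omit $P=zu_1uw_1$, but a uniform treatment covers everything:

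\begin{itemize}
\item If $u_1\in V(P)$, replace $P$ by $uu_1w_1w$. This avoids the other paths because the unique neighbour $w_1$ of $w$ lies on $P$.
\item If $u_1\notin V(P)$, then $u$ is an end of $P=uw_1ab$ and $w\notin V(\mathcal{T})$, so $ww_1ab$ works.
\end{itemize}
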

\begin{proof} 
 For any two nonadjacent vertices in $G$, say $u$ and $v$, we have $G+uv$ contains a copy of $tP_4$, denoted $T(uv)$, and there is a copy of $P_4$ in $T(uv)$ containing $uv$, denoted $P_4(uv)$. 
 
    \begin{itemize}
  \item[(I)] Suppose $u_1w_1\in E(G)$. Since $G$ is $tP_{4}${-}saturated, we have $G+uw_1$ contains a copy of $tP_4$. 
  Note that $w\notin T(uw_1)- P_4(uw_1)$. 
  If $u_{1}\in P_4(uw_1)$, then $(T(uw_1)- P_4(uw_1))\cup \{u u_{1}w_{1}w\}$ is a copy of $tP_4$ in $G$, a contradiction. If $u_{1}\notin P_4(uw_1)$, then $u$ is one end of $P_4(uw_1)$ and $w\notin P_4(uw_1)$. Replacing $u$ with $w$ in $P_4(uw_1)$ results in a copy of $P_4$ in $G$, denoted by $P_4'$. In this case, $P_4'\cup (T(uw_1)- P_4(uw_1))$ is a copy of $tP_4$ in  $G$, a contradiction.

  \item[(II)] Suppose $uw\in E(G)$. Lemma \ref{deg=1} implies that $N(u)\cap N(w)\ne \emptyset$, say $N(u)\cap N(w)=\{v\}$. Let $w_1$ be the third neighbor of $w$. Then $uw_1\notin E(G)$ and $G+uw_1$ contains a copy of $tP_4$.  If $\{v,w\} \cap V(T(uw_1)-P_4(uw_1))=\emptyset$, then 
 replacing  $P_4(uw_1)$ with $\{vuww_1\}$ in $T(uw_1)$ results in a copy of $tP_4$ in $G$, a contradiction. Since $d(w)=3$, it is impossible for  $v\notin V(T(uw_1)-P_4(uw_1)) $  while $w\in V(T(uw_1)-P_4(uw_1))$. 
 Consider the case that  $v\in V(T(uw_1)-P_4(uw_1)) $ and $w\notin V(T(uw_1)-P_4(uw_1))$.  If $w\in P_4(uw_1)$, then $P_4(uw_1)=wuw_1w_2$, where $w_2\in N(w_1)$. Replacing $P_4(uw_1)$  with $uww_1w_2$ in $T(uw_1)$ results in a copy of $tP_4$ in $G$, a contradiction.  If $w\notin T(uw_1) $, then $u$ is  one end of $P_4(uw_1)$. Replacing $u$ with $w$ in $P_4(uw_1)$ results in a copy of $tP_4$ in $G$, a contradiction. 
Thus, assume that $v,w$ belong to $T(uw_1)-P_4(uw_1)$. Since $d(w)=3$, $v$ and $w$ belong to the same $P_4$, denoted $P_4'$. 
 Since $d(u)=2$ and $d(w)=3$, we see that $w$ is one end of $P_4'$ and $u$ is one end of $P_4(uw_1)$. Now, exchanging  the roles of $u$ and $w$ in $P_4(uw_1)$ and  $P_4'$ results in a copy of $tP_4$ in $G$, a contradiction.
 Hence, $uw\notin E(G)$.  
    \end{itemize}
    
 Therefore, the proof is complete. 
 \end{proof}

 Next, we present several properties of $tP_{4}$-saturated graphs concerning vertices of degree at least $3$. 
\begin{lemma}\label{lemdeggeq3}
    For any integer $t\geq 2$, let $G$ be a $tP_{4}${-}saturated connected graph. Let $u\in V(G)$ with $d(u)\geq 3$ and $N(u)=\{u_1,\ldots,u_{d(u)}\}$.
    \begin{enumerate}
        \item If there exist $u_1,u_2\in N(u)$ such that $d(u_1)=3$ \rm{(say $N(u_1)=\{u, u_{11}, u_{12}\}$)} and $d(u_2)=1$, then $u_{11}u_{12}\in E(G)$ and $d(u_{1i})\ge 3$ for any $i\in [2]$.
        \item If $u_1u_2\notin E(G)$ and $d(u_i)=1$ for any $u_i\in N(u)\setminus\{u_1,u_2\}$, then for $i\in [2]$, no two vertices $u_{i1},u_{i2} \in N(u_i)$ with degree $2$ are adjacent in $G$. 
        \item  If $E(G[N(u)])=\emptyset$ and there is a vertex $u_1\in N(u)$ such that $d(u_1)\geq 3$ and $d(u_{1i})=1$ for any $u_{1i}\in N(u_1)\setminus \{u\}$, then for each $u_j\in N(u)\setminus\{u_1\}$ with $d(u_j)\geq 3$, there are at least three vertices in $N(u_j)$ of degree greater than $1$.
        \item If $E(G[N(u)])=\{u_1u_2\}$ and $d(u_i)=1$ for any $u_i\in N(u)\setminus\{u_1,u_2\}$, then $d(u_i)\geq 3$ for any $i\in[2]$. 
    \end{enumerate}
\end{lemma}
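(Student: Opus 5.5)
Each of the four items will be proved by contradiction, in the same way as Lemma \ref{lemdeg-eq}. Assume the conclusion fails. Pick a suitable non-edge $xy$. Let $T(xy)$ be a copy of $tP_4$ in $G+xy$ and let $P_4(xy)\subseteq T(xy)$ be the path through $xy$. The goal is to reroute $P_4(xy)$, and where needed one neighbouring path of $T(xy)$, using edges of $G$ only, which gives a $tP_4$ in $G$. The useful facts are these. A degree-$1$ vertex can only be an end of a path in $T(xy)$. A degree-$2$ vertex lies in a triangle by Lemma \ref{deg=1}. When a pendant vertex $w$ hangs at the same support vertex as a vertex $z$ that is an end of some path, we may swap $z$ for $w$, as in part (I) of Lemma \ref{lemdeg-eq}.

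\textbf{Item 1.} Let $u_2$ be the pendant neighbour of $u$. First suppose $u_{11}u_{12}\notin E(G)$ and add the edge $u_{11}u_{12}$. If $u_1\notin P_4(u_{11}u_{12})$, the edge $u_{11}u_{12}$ is replaced by the detour $u_{11}u_1u_{12}$. To keep four vertices, shorten the path at one end; alternatively, rebuild the path as $u_2uu_1u_{1i}$, freeing the other vertices. If $u_1$ is used in $T$, it uses at most three of its neighbours, and a case check on whether $u$ and $u_2$ are covered by $T$ always lets us substitute $u_2$ or $u$ to absorb the lost edge. Next, to show $d(u_{11})\ge 3$: if $d(u_{11})\le 2$, then $u_{11}$ is adjacent only to $u_1$ and $u_{12}$ (degree $1$ is already excluded once $u_{11}u_{12}\in E(G)$). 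Add the edge $uu_{11}$ or the edge $u_2u_{11}$ and reroute through $u_1$.

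\textbf{Item 2.} Suppose $u_{11},u_{12}\in N(u_1)$ both have degree $2$ and are adjacent, so $u_1u_{11}u_{12}$ is a pendant triangle. Add the edge $u_1u_2$, or $uu_{11}$, which is a non-edge. The only vertices near $u$ are pendant leaves and $u_2$, so the new path can be replaced by one inside $\{u,\text{leaves},u_1,u_{11},u_{12}\}$, for example $u_{12}u_{11}u_1u$.

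\textbf{Item 3.} Suppose $u_j$ has at most two neighbours of degree $>1$. Then $u_j$ carries pendant leaves. Add the edge between a leaf of $u_1$ and a leaf of $u_j$, or the edge $u_1u_j$ (a non-edge, since $N(u)$ is independent). Use the star structure at $u_1$ to reroute, as in Item 1.

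\textbf{Item 4.} Suppose $d(u_1)\le 2$. Then $u_1$ has degree $2$ and $uu_1u_2$ is a triangle. Lemma \ref{lemdeg-eq}(2) then forces $d(u)\ne 3$, or gives the required structure directly. Otherwise add the edge from $u_1$ to a far neighbour of $u_2$ and reroute.

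\textbf{Main obstacle.} The hardest part is the bookkeeping in Item 1. There I must enumerate where $u$, $u_1$ and $u_2$ sit in $T(xy)$: in $P_4(xy)$, in another path, or unused. In each placement I must exhibit the exchange explicitly. My plan is to reuse the three-way split of Lemma \ref{lemdeg-eq}(II), refined by whether $u_1$ is an interior vertex or an end of its path.
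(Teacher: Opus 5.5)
Your Items 1 and 2 are essentially the paper's argument. In Item 1, the split is on whether $u$ lies on another path of $T(u_{11}u_{12})$; if $u_1$ does, you swap it for the pendant $u_2$, and then use either $u_2uu_1u_{11}$ or the detour $u_{1j}u_1u_{1i}w$. That is exactly what the paper does, and $d(u_{1i})\ge 3$ is just Lemma~\ref{lemdeg-eq}(2) applied to the edge $u_1u_{1i}$. In Item 2, adding $u_1u_2$ and using $u_{12}u_{11}u_1u$ is the paper's proof: since $u_1,u_2\in P_4(u_1u_2)$, none of $u,u_{11},u_{12}$ can lie on another path. Your alternative non-edges there, $u_2u_{11}$ in Item 1 and $uu_{11}$ in Item 2, fail for the reason below.

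\textbf{The gap is in Items 3 and 4.} The non-edges you add do not prevent the vertex you want to route through from sitting on a different path of $T$. In that situation rerouting is not merely awkward but impossible.

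For Item 3, let $G$ be the tree with edges $uu_1,uu_j,uu_3,u_1\ell_1,u_1\ell_1',u_ja,u_jb,u_3x,xy$. All hypotheses of Item 3 hold, and $u_j$ has only one neighbour of degree $>1$. Every $P_4$ of $G$ passes through $u$, so $G$ is $2P_4$-free. Yet $G+u_1u_j\supseteq \ell_1u_1u_ja\cup uu_3xy$ and $G+\ell_1a\supseteq u_1\ell_1au_j\cup uu_3xy$. So no argument converting $T(u_1u_j)$ or $T(\ell_1a)$ into a $tP_4$ of $G$ can exist: once $u$ is on another path, the star at $u_1$ yields no $P_4$.

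For Item 4, let $u$ be adjacent to $u_1,u_2,\ell_3,\ell_4$, with further edges $u_1u_2$, $u_2w,u_2x,u_2y$ and $yz$. The hypotheses hold with $d(u_1)=2$. Lemma~\ref{lemdeg-eq}(2) gives nothing, since $d(u)=4$ and $d(u_2)=5$. Every $P_4$ passes through $u_2$, so $G$ is $2P_4$-free, yet $G+u_1w\supseteq \ell_3uu_1w\cup xu_2yz$.

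These graphs are not saturated, of course. But they show that saturation at the particular non-edges you chose cannot by itself yield a contradiction.

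The missing idea is to choose the non-edge so that the vertices you need cannot lie on any other path of $T$.

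In Item 3, add $uu_{j1}$, where $u_{j1}$ is the only neighbour of $u_j$ other than $u$ that may have degree $>1$ (or any leaf of $u_j$ if there is none). This is a non-edge because $N(u)$ is independent. Now $u\in P_4(uu_{j1})$, so $u_1$ and $u_j$, whose remaining neighbours are leaves, could only be star centres on another path. Hence $u_1uu_ju_{j1}$ can replace $P_4(uu_{j1})$.

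In Item 4, add $u_2u_3$ for a pendant neighbour $u_3$ of $u$, which exists since $d(u)\ge 3$. With $u_2,u_3$ on $P_4(u_2u_3)$, the vertex $u$ sees only $u_1$ and leaves, and $u_1$ sees only $u$. So neither lies on another path, and $u_2u_1uu_3$ replaces $P_4(u_2u_3)$.
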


 \begin{proof} 
 Similar to Lemma \ref{lemdeg-eq}, denote by $T(uv)$ a $tP_4$ in the graph $G+uv$ and denote by $P_4(uv)$ the $P_4$ in $T(uv)$ containing $uv$.
    \begin{itemize}
      \item[(I)] Suppose $u_{11}u_{12}\notin E(G)$. Then $G+u_{11}u_{12}$ contains a copy of $tP_4$. If $u\notin V(T(u_{11}u_{12})- P_4(u_{11}u_{12}))$, then 
      $u_1\notin V(T(u_{11}u_{12})- P_4(u_{11}u_{12})) $ and  $u_2\notin V(T(u_{11}u_{12})- P_4(u_{11}u_{12})) $.
      Replacing $P_4(u_{11}u_{12})$ with $u_2uu_1u_{11}$ in $T(u_{11}u_{12})$  results in a copy of $tP_4$ in $G$,  a contradiction. Assume that $u\in V(T(u_{11}u_{12})- P_4(u_{11}u_{12}))$.  In this case, we assert that there is a copy of $tP_4$ in $G+u_{11}u_{12}$, denoted $T(u_{11}u_{12})$, such that $u_1\notin V(T(u_{11}u_{12})- P_4(u_{11}u_{12}))$. 
      If $u_1\in V(T(u_{11}u_{12})- P_4(u_{11}u_{12}))$, by $d(u_1)=3$, then $u$ and $u_1$ lie on the same $P_4$ in $T(u_{11}u_{21})$, denoted  $P_4'$,  with $u_1$ as one end of $P_4'$. Replacing $u_1$ with $u_2$ in $P_4'$ results in a copy of $tP_4$ in $G+u_{11}u_{21}$, denoted $T'(u_{11}u_{21})$, such that $u_1\notin V(T'(u_{11}u_{12})- P_4(u_{11}u_{12}))$.
    Thus, we find the desired $tP_4$, as asserted.
      Note that $u_1\notin V(T(u_{11}u_{12})- P_4(u_{11}u_{12}))$ and $u\in  V(T(u_{11}u_{12})- P_4(u_{11}u_{12}))$. Then there exists a vertex $w\in N(u_{1i})\setminus \{u_1, u\}$ such that $w\in  P_4(u_{11}u_{21})$ for some $i\in [2]$. Replacing $P_4(u_{11}u_{12})$ with $u_{1j}u_1u_{1i}w$ for $\{i,j\}=[2]$ results in a copy of $tP_4$ in $G$, a contradiction. 
    Thus $u_{11}u_{12}\in E(G)$ and $d(u_{1i})\ge 2$, where $i\in [2]$. Since $d(u_1)=3$, 
    Lemma \ref{lemdeg-eq} (II) implies that $d(u_{1i})\ge 3$ for any $i\in [2]$.  

\item[(II)] Suppose that there are two vertices $u_{i1}, u_{i2} \in N(u_i)$ with $d(u_{i1})=d(u_{i2})=2$ and $u_{i1}u_{i2}\in E(G)$ for some $i\in [2]$. Then $G+u_1u_2$ contains a copy of $tP_4$ as $u_1u_2\notin E(G)$. In this case $u,u_{i1}, u_{i2} \notin V(T(u_1u_2)-P_4(u_1u_2))$. However, $(T(u_1u_2)-P_4(u_1u_2)) \cup \{uu_iu_{i1}u_{i2}\}$ is a copy of $tP_4$ in $G$,  a contradiction. 

\item[(III)] Let $N(u_i)=\{u,u_{i1},\ldots,u_{i(d(u_i)-1)}\}$. Suppose $d(u_2)\ge 3$ and there are at most two vertices in $N(u_2)$ of degree greater than $1$, that is, $d(u_{2j})=1$ for any $u_{2j}\in N(u_2)\setminus\{u,u_{21}\}$. Since $uu_{21}\notin E(G)$, $G+uu_{21}$ contains a copy of $tP_4$. In this case $u_{1}, u_{2} \notin V(T(uu_{21})-P_4(uu_{21}))$. However, $(T(uu_{21})-P_4(uu_{21})) \cup \{u_1uu_{2}u_{21}\}$ is a copy of $tP_4$ in $G$, a contradiction. 

\item[(IV)] Suppose that $d(u_1)=2$. Since $u_2u_3\notin E(G)$ where $u_3\in N(u)$, we see that $G+u_2u_3$ contains a copy of $tP_4$ and $u, u_{1} \notin V(T(u_2u_3)-P_4(u_2u_3))$. It follows that $(T(u_2u_3)-P_4(u_2u_3)) \cup \{u_2u_1uu_3\}$ is a copy of $tP_4$ in $G$, which is a contradiction. 
    \end{itemize}
    
     Therefore, the proof is complete. 
 \end{proof}
                                                 
\section{The Proof of Theorem \ref{tp4suffcondi}}\label{toolpf}
\begin{theproof}[Proof of Theorem \ref{tp4suffcondi}]
Let $\rho$ be the spectral radius of $G$. Suppose that $p(x)=x^3-2x^2-3x+4.$ By Lemma \ref{lemrho-p}, we have 
	\begin{align}\label{fA}
		p(\rho) \ge&   \min_{v\in V(G)} \bigg\{\sum_{w\in V(G)}(A^3-2A^2-3A+4I)_{vw}\bigg\}.
	\end{align}

\begin{claim}\label{A3}
For any fixed $i\in V(G)$, we have 	$$\sum_{j\in V(G)}(A^3)_{ij}=d^2(i)+\sum_{k\in N(i)\cup N_2(i)}|N(k)\cap N(i)|\cdot d(k).$$ 
\end{claim}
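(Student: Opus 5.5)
The plan is to write the row sum of $A^3$ as a count of walks and then regroup the walks by their third vertex. For a fixed vertex $i$, $\sum_{j}(A^3)_{ij}$ is the number of walks $i=x_0,x_1,x_2,x_3$ of length $3$ starting at $i$. Such a walk is a choice of $x_1\in N(i)$, then $x_2\in N(x_1)$, then $x_3\in N(x_2)$. Summing over $x_3$ first, the number of walks with a given $x_2=k$ is
\[
(A^2)_{ik}\,d(k)=|N(i)\cap N(k)|\cdot d(k).
\]
Hence
\[
\sum_{j}(A^3)_{ij}=\sum_{k\in V(G)}|N(k)\cap N(i)|\cdot d(k).
\]

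Next I split the sum according to where $k$ can lie. The factor $|N(k)\cap N(i)|$ is nonzero exactly when $k$ is at distance at most two from $i$ along some walk. This means either $k=i$, or $k\in N(i)$ (when $k$ has a neighbor in $N(i)$), or $k\in N_2(i)=(\bigcup_{u\in N(i)}N(u))\setminus N[i]$. Every vertex outside $\{i\}\cup N(i)\cup N_2(i)$ has no common neighbor with $i$, so it contributes $0$. For a vertex $k\in N(i)\cup N_2(i)$ the term is kept as it stands, even when it happens to vanish for some $k\in N(i)$.

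The remaining term is $k=i$. Here $N(i)\cap N(i)=N(i)$, so the contribution is $|N(i)|\cdot d(i)=d^2(i)$. This term corresponds to the walks $i\to x_1\to i\to x_3$. Adding it to the sum over $N(i)\cup N_2(i)$ gives exactly the claimed formula. As a quick check, the same counting yields $\sum_j (A^2)_{ij}=\sum_{u\in N(i)} d(u)$ and $\sum_j A_{ij}=d(i)$, consistent with the terms appearing in $F(v)$.

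There is no real obstacle here. The only point needing care is that the three index sets $\{i\}$, $N(i)$ and $N_2(i)$ are pairwise disjoint, which follows directly from the definition of $N_2(i)$, and that no vertex is double counted.
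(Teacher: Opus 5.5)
Your proof is correct and is essentially the paper's argument. The paper also counts length-three walks from $i$, separates those with $i_2=i$ (which give $d^2(i)$) from those with $i_2\neq i$, and regroups the latter by the middle vertex $k$. That is exactly your decomposition via $(A^2)_{ik}\,d(k)=|N(i)\cap N(k)|\cdot d(k)$, and you make explicit, more than the paper does, why only $k\in\{i\}\cup N(i)\cup N_2(i)$ contribute.
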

\begin{proof}
Note that $\sum_{j\in V(G)}(A^3)_{ij}$ is equal to the number of all walks of length three starting from $i$ in $G$. 
Let $S=\{i i_1 i_2 i_3: $ where $i i_1 i_2 i_3$ is a walk of length three starting from $i$ in $G$\}, $S_1=\{i i_1 i_2 i_3\in S: i_2=i\}$ and $S_2=\{i i_1 i_2 i_3\in S: i_2\ne i\}$. 
Then $|S|=\sum_{j\in V(G)}(A^3)_{ij}$.
Note that $|S_1|=d^2(i)$ and 
\begin{align*}
	|S_2|&=\sum_{i_1\in N(i)}\sum_{i_2\in N(i_1)\setminus\{i\}}d(i_2)
	= \sum_{k\in N(i)\cup N_2(i)}|N(k)\cap N(i)| \cdot d(k).
\end{align*}
Thus $$\sum_{j\in V(G)}(A^3)_{ij}=|S|=|S_1|+|S_2|=d^2(i)+\sum_{k\in N(i)\cup N_2(i)}|N(k)\cap N(i)| \cdot d(k).$$
Hence, Claim \ref{A3} follows.
 \end{proof}
Combining Inequality (\ref{fA}) with Claim \ref{A3} yields that 
	\begin{align*}
	p(\rho)\ge& \min_{v\in V(G)}  \bigg\{ d^2(v)+\sum_{w\in N(v)\cup N_2(v)}|N(w)\cap N(v)| \cdot d(w)-\sum_{w\in N(v)}2d(w)
	-3d(v)+4 \bigg\}.
	\end{align*}
    By assumptions of Theorem \ref{tp4suffcondi}, we get that $p(\rho)\ge \min_{v\in V(G)} \{F(v)\}\ge 0$. 
Denote the three roots of $p(x)=0$ by $r_1,r_2,r_3$. Let $r_1\ge r_2\ge r_3$. We can see $r_1\cdot r_2\cdot r_3=-4<0$ and so $r_3<0$. 
By calculation, we find $\rho(N_4)=r_1=\frac{1+\sqrt{17}}{2}$ and $r_2=1$. From $p(\rho)\ge 0$, it follows that either $\rho\ge r_1$ or $r_3\le \rho \le r_2=1$. Observe that $G$ contains a copy of $P_3$ since $n\ge 3$. It implies that $\rho >1$ and so $\rho \ge \rho(N_4)$ with equality if and only if $F(v)=0$ for each $v\in V(G)$. This completes the proof. 
\end{theproof}

\section{The Proof of Theorem \ref{tp4spec}}\label{mainpf}
Let $e(A,B)$ denote the number of edges in $G$ with one end in $A$ and the other in $B$. In addition, we simply write $e(A,A)$ as $e(A)$. For a vertex $v\in V(G)$, define $A=N(v)$, $B=N_2(v)$ and $C=V(G)\setminus(A\cup B\cup\{v\})$. 
We have the following fact. 
\begin{fact}\label{fact1}
  If $d(w)\geq 2$ for any $w\in N_2(v)$, then $$\sum_{w\in N_2(v)}|N(w)\cap N(v)| \cdot d(w)-2e(A,B)\geq 0,$$ where equality holds if and only if $d(w)=2$ for any $w\in N_2(v)$.    
\end{fact}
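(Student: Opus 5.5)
We rewrite the left-hand side as a sum over the vertices of $B=N_2(v)$ and compare it term by term. By definition of $N_2(v)$, every $w\in B$ is adjacent to $v$'s neighbourhood, so $|N(w)\cap N(v)|\ge 1$. Moreover, each edge counted in $e(A,B)$ has exactly one end $w$ in $B$ and the other in $A=N(v)$. Grouping these edges by their end in $B$ gives
\[
e(A,B)=\sum_{w\in B}|N(w)\cap A|=\sum_{w\in N_2(v)}|N(w)\cap N(v)|.
\]
Substituting, we obtain
\[
\sum_{w\in N_2(v)}|N(w)\cap N(v)|\cdot d(w)-2e(A,B)=\sum_{w\in N_2(v)}|N(w)\cap N(v)|\,\bigl(d(w)-2\bigr).
\]

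Under the hypothesis $d(w)\ge 2$ for every $w\in N_2(v)$, each summand is a product of a positive integer $|N(w)\cap N(v)|\ge 1$ and a nonnegative integer $d(w)-2$. Hence the sum is nonnegative, which proves the inequality.

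For the equality case, a sum of nonnegative terms vanishes exactly when every term vanishes. Since the factor $|N(w)\cap N(v)|$ is strictly positive, the term for $w$ vanishes if and only if $d(w)=2$. So equality holds if and only if $d(w)=2$ for all $w\in N_2(v)$. If $N_2(v)=\emptyset$, both sides are $0$ and the characterization holds vacuously.

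The only point needing care is the double-counting identity for $e(A,B)$, which relies on $A$ and $B$ being disjoint ($B$ excludes $N[v]$ by definition). Everything else is immediate.
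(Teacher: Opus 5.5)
Your proof is correct and rests on the same double count as the paper, namely $e(A,B)=\sum_{w\in B}|N(w)\cap A|$. The paper instead splits $N_2(v)$ into $B_1$ (exactly one neighbour in $A$) and $B_2$ (at least two) and chains the lossy bounds $|N(w)\cap N(v)|\,d(w)\ge 2d(w)\ge 2|N(w)\cap A|$ on $B_2$, leaving the equality case implicit. Your exact identity $\sum_{w\in N_2(v)}|N(w)\cap N(v)|\,(d(w)-2)$ gives both the inequality and the equality characterization at once.
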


\begin{proof}
 Let $B_1=\{w: w\in N_2(v)\text{ and }|N(w)\cap N(v)|=1\}$ and $B_2=\{w: w\in N_2(v)\text{ and }|N(w)\cap N(v)|\geq 2\}$. Then
    \begin{align*}
        \sum_{w\in N_2(v)}|N(w)\cap N(v)| \cdot d(w)&\geq \sum_{w\in B_1}d(w)+\sum_{w\in B_2}2d(w)\\
&\geq 2|B_1|+2e(A,B_2)\\
&= 2e(A,B_1)+2e(A,B_2)\\
&= 2e(A,B),
    \end{align*}
     and equality holds if and only if $d(w)=2$ for any $w\in N_2(v)$. 
\end{proof}

Note that the graphs $F_1$, \ldots, $F_{20}$ referred to in the subsequent proofs are illustrated in Figure \ref{graphsF}. 
\begin{lemma}\label{deg3}
   Let $t\geq 2$ and $G$ be a connected $tP_4$-saturated graph on $n$ vertices with $n\ge 4t$ and $\rho(G)\le \rho(N_4)$. For $v \in V(G)$ with $d(v)=3$ and $e(A)=0$, let $N(v)=\{v_1, v_2, v_3\}$.  Then \begin{align}\label{d3A0goal}
    \sum_{w\in   N_2(v)} |N(w)\cap N(v)|\cdot d(w) \ge  2e(A,B)+2,
\end{align}
where equality holds if and only if  $N(v)$ satisfies: both $v_1$ and $v_2$ have degree $1$, while $v_3$ has degree 
 $3$ with $N(v_3)=\{v,v_{31},v_{32}\}$, $d(v_{31})=d(v_{32})=3$ and $v_{31}v_{32}\in E(G)$. 
\end{lemma}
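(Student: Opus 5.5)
The plan is to bound the left side of (\ref{d3A0goal}) vertex by vertex over $B=N_2(v)$, comparing each $w\in B$ with its contribution $2|N(w)\cap A|$ to $2e(A,B)$. Write the deficit as
\[
D:=\sum_{w\in B}|N(w)\cap A|\,(d(w)-2).
\]
Then the left side of (\ref{d3A0goal}) minus $2e(A,B)$ equals $D$, and the task becomes $D\ge 2$, with equality only in the described configuration. The only negative terms come from $w\in B$ with $d(w)=1$. Such a $w$ is adjacent to exactly one $v_i$ and contributes $-1$. So first I would show that every term with $d(w)=2$ is $0$, and then find enough high-degree vertices in $B$ to compensate for all degree-$1$ vertices and leave a surplus of $2$.

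I would split into cases by the number of leaves among $v_1,v_2,v_3$. Since $e(A)=0$ and $d(v)=3$, Lemma~\ref{lemdeg-eq}(II) shows that no $v_i$ has degree $2$ lying in a triangle with $v$. By Lemma~\ref{deg=1}(1), any $v_i$ of degree $2$ would need a triangle $v_i x y$ with $x,y\in B$; then $d(v_i)=2$ forces both of its neighbours other than $v$ to lie in that triangle. I would treat these subcases separately.

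\textbf{Case 1: two leaves, $d(v_1)=d(v_2)=1$.} Then $B=N(v_3)\setminus\{v\}$. If $d(v_3)=3$, Lemma~\ref{lemdeggeq3}(1) gives $v_{31}v_{32}\in E(G)$ and $d(v_{3i})\ge3$, so $D\ge 2$. Equality holds exactly when $d(v_{31})=d(v_{32})=3$, which is the stated extremal configuration. If $d(v_3)\ge4$, Lemma~\ref{lemdeggeq3}(2)--(4) and Lemma~\ref{lemdeg-eq}(1) restrict the leaves in $N(v_3)$. For instance, leaves hanging at $v_3$ and at $v$ cannot coexist badly, because Lemma~\ref{lemdeg-eq}(1) forbids adjacent supports of distinct leaves. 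I would use these to show that $N(v_3)\setminus\{v\}$ contains at least two non-leaves, each of degree at least $3$ or in a triangle, giving $D\ge 3$.

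\textbf{Case 2: at most one leaf.} Each non-leaf $v_i$ contributes its neighbours in $B$. Lemma~\ref{lemdeggeq3}(3) says that if some $v_1$ has all its other neighbours leaves, then every other $v_j$ of degree at least $3$ has at least three non-leaf neighbours. I would use this to show that each non-leaf $v_i$ with leaf neighbours also has neighbours in $B$ of degree at least $3$. The leaf neighbours of distinct $v_i$'s interact through Lemma~\ref{lemdeg-eq}(1), which makes the supports nonadjacent.

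\textbf{Remaining tool: the hypothesis $\rho(G)\le\rho(N_4)$.} If many leaves hang on a single $v_i$, then $G$ contains a large star $K_{1,k}$ together with extra structure. I would then invoke Lemma~\ref{subgraphrho}: a subgraph with spectral radius at least $\rho(N_4)$, plus connectedness, contradicts the hypothesis. A concrete choice is the graph $K_{1,4}$ with a pendant path, or a triangle with pendant paths; presumably these are among the $F_i$ of Figure~\ref{graphsF}.

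I expect the main obstacle to be the bookkeeping in Case 2 and in the subcase $d(v_3)\ge4$. The saturation lemmas give only local constraints, so the leaves hanging on $v_i$ must be played off against the degrees of the other neighbours. Ruling out configurations in which leaves outnumber the surplus needs the spectral bound through explicit forbidden subgraphs. I would first enumerate the small configurations with $D\le 1$ and check each against either a saturation lemma or Lemma~\ref{subgraphrho}.
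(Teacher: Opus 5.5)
Your reformulation as $D\ge 2$ is correct, and so is your treatment of Case~1 with $d(v_3)=3$ via Lemma~\ref{lemdeggeq3}(1), which yields exactly the equality configuration. Beyond that, the plan targets the wrong obstacle, because negative terms in $D$ never occur.
\begin{itemize}
\item If some $v_i$ is a leaf, Lemma~\ref{lemdeg-eq}(1) forbids a leaf hanging at any $v_j$, since $v_j$ is adjacent to $v$.
\item If no $v_i$ is a leaf, then $d(v_i)\ge 3$ for all $i$. Degree $2$ is excluded outright by Lemma~\ref{lemdeg-eq}(2) because $d(v)=3$. Your triangle $v_ixy$ with $x,y\in B$ cannot exist, since such a $v_i$ has only one neighbour besides $v$. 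A leaf $w$ at $v_i$ would, by Lemma~\ref{lemdeggeq3}(1) applied with $u=v_i$, $u_1=v$, $u_2=w$, force $v_jv_k\in E(G)$, contradicting $e(A)=0$.
\end{itemize}
So $B$ contains no leaves and every term of $D$ is nonnegative. Compensating leaves via Lemma~\ref{lemdeggeq3}(3) is beside the point. The forbidden subgraphs you suggest could not serve anyway, being far too sparse: for example, $K_{1,4}$ with a pendant edge has $\rho\approx 2.07<\rho(N_4)\approx 2.56$.

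The real difficulty is the degree-$2$ vertices of $B$, which contribute $0$. Write $D=\sum_i D_i$ with $D_i=\sum_{w\in N(v_i)\setminus\{v\}}(d(w)-2)$.
\begin{itemize}
\item When no $v_i$ is a leaf, you need $D_i\ge 1$ for every $i$. If all $B$-neighbours of $v_i$ have degree $2$, Lemma~\ref{lemdeg-eq}(2) gives $d(v_i)\ge 4$. By Lemma~\ref{deg=1}(1) these neighbours pair off into at least two triangles at $v_i$, so $G$ properly contains $F_2$. Lemma~\ref{subgraphrho} with $\rho(F_2)=\rho(N_4)$ then contradicts $\rho(G)\le\rho(N_4)$.
\item When $v$ has a leaf neighbour, you need $D_i\ge 2$ for each non-leaf $v_i$, strictly unless $d(v_i)=3$. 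For $d(v_i)\ge 4$, either all of its at least three $B$-neighbours have degree at least $3$, or a degree-$2$ neighbour $v_{i1}$ lies in a triangle with some $v_{i2}$. In the latter case Lemmas~\ref{lemdeggeq3}(2) and~\ref{lemdeg-eq}(2) force $d(v_{i2})\ge 4$, and a second degree-$2$ neighbour of $v_i$ would create $F_1$ or $F_2$. Either way $D_i\ge 3$.
\end{itemize}
Your assertion that two non-leaves ``of degree at least $3$ or in a triangle'' give $D\ge 3$ skips exactly this step. The local saturation lemmas do not exclude $N(v_3)\setminus\{v\}=\{a,b,c\}$ with $d(a)=d(c)=2$, $ab,bc\in E(G)$ and $d(b)=4$. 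This gives $D=2$ outside the stated equality configuration, and it is ruled out only because $G[\{v_3,a,b,c\}]\cong F_1$. Without this $F_1/F_2$ argument, neither the inequality in your Case~2 nor the ``only if'' part of the equality statement is established.
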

   \begin{proof} 
    We first conclude that if there is a vertex $v_1\in N(v)$ such that $d(v_1)=1$, then Inequality \eqref{d3A0goal} holds.
Since $G$ is connected and $n\ge 4t\ge 8$, we have $ d(v_2)\ge 2$ or $d(v_3)\ge 2$. Assume that $d(v_3)\ge 2 $. By $e(A)=0$, Lemma \ref{deg=1} (I) implies that $d(v_3)\ge 3$. 
Since $d(v_1)=1$, Lemma \ref{lemdeg-eq} (I) yields that there is no degree $1$ vertex in $N(v_2)\cup N(v_3)$, which means that for any vertex $u\in N_2(v)$, we have $d(u)\ge 2$. By Fact \ref{fact1}, we obtain $\sum_{w\in   N_2(v)} |N(w)\cap N(v)|\cdot d(w) \ge  2e(A,B)$. 
Since $d(v_3)\ge 3 $, there is a vertex $v_{31}\in N(v_3)\cap N_2(v)$ such that $d(v_{31})\ge 2$. If $d(v_{31})=2$, by Lemma \ref{deg=1} (I), there is another vertex $v_{32}\in N(v_3)\cap N_2(v)$ such that $v_{31}v_{32}\in E(G)$. Lemma \ref{lemdeg-eq} (II)  and Lemma \ref{lemdeggeq3} (II) imply that $d(v_3)\ge 4$ and $d(v_{32})\ge 4$. Assume that the fourth neighbor of $v_3$ is $v_{33}$. If $d(v_{33})=2$,  by Lemma \ref{deg=1} (I), then $F_1\subseteq G $ or $F_2\subseteq G$. In both cases, $\rho(G)> \rho(N_4)$, a contradiction. This implies $d(v_{33})\geq 3$ and consequently $\sum_{w\in   N_2(v)} |N(w)\cap N(v)|\cdot d(w) >  2e(A,B)+2$. Assume that any vertex in $N(v_3)\cap N_2(v)$ has degree at least $3$. Since $d(v_3)\ge 3$, there are at least two vertices in $N(v_3)\cap N_2(v)$ have degree at least $3$. Hence, Inequality \eqref{d3A0goal} holds, and equality holds only when $d(v_1)=d(v_2)=1$, $d(v_3)=3$, and $d(v_{31})=d(v_{32})=3$, where $N(v_3)=\{v, v_{31}, v_{32}\}$. Furthermore, Lemma \ref{lemdeggeq3} (I) implies that $v_{31}v_{32}\in E(G)$. 
 
Now, assume that $d(v_i)\ge 2$ for any $i\in [3]$. In fact, by Lemma \ref{deg=1} (I) and $e(A)=0$, we have $d(v_i)\ge 3. $
If there is a vertex $w_1\in N_2(v)$ such that $d(w_1)=1$, say $N(w_1)=v_1$, by Lemma \ref{lemdeggeq3} (I), then $v_2v_3\in E(G)$, which contradicts $e(A)=0$. 
It remains to consider the case where for any $w\in N_2(v)$, $d(w)\ge 2$. If there is a vertex $v_{i1}\in N(v_i)\cap N_2(v)$ such that $d(v_{i1})\ge 3$ for any $i\in [3]$, then $\sum_{w\in   N_2(v)} |N(w)\cap N(v)|\cdot d(w) >  2e(A,B)+2$. Thus, assume that there exists a vertex, say $v_1$, such that any vertex in $N(v_1)\cap N_2(v)$ has degree $2$. Lemma \ref{lemdeg-eq} (II) implies that $d(v_1)\ge 4$. By Lemma \ref{deg=1} (I), there is a copy of $F_1$ or $F_2$ in $G[N[v_1]]$. It follows that $\rho(G)>\rho(N_4)$ since $\rho(F_1)=\rho(F_2)=\rho(N_4)$, which contradicts $\rho(G)\le \rho(N_4)$. 
   \end{proof}
   
 \begin{lemma}\label{contp4}
Let $t\ge 2$ and $G$ be a connected $tP_4$-saturated graph on $n$ vertices with $n\ge 4t$ and $\rho(G)\le \rho(N_4)$. Then $G\cong N_4$. 
\end{lemma}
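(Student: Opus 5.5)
The plan is to show that $F(v)\ge 0$ for every vertex $v$ of $G$. Since $G$ is connected and $n\ge 4t\ge 8$, Theorem~\ref{tp4suffcondi} then gives $\rho(G)\ge\rho(N_4)$. Combined with the hypothesis $\rho(G)\le\rho(N_4)$, this forces equality, and hence $F(v)=0$ for every $v$. The last step is to read off the structure of $G$ from these equalities and conclude $G\cong N_4$.

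To organize the computation, write $A=N(v)$ and $B=N_2(v)$ as in Section~\ref{mainpf}, and use $\sum_{w\in A}d(w)=d(v)+2e(A)+e(A,B)$. This rewrites $F(v)$ as
\begin{align*}
F(v)=(d(v)-1)(d(v)-4)+\Big[\sum_{w\in A}|N(w)\cap A|\,d(w)-4e(A)\Big]+\Big[\sum_{w\in B}|N(w)\cap A|\,d(w)-2e(A,B)\Big].
\end{align*}
The third bracket is handled by Fact~\ref{fact1} whenever $B$ has no vertex of degree $1$. The second bracket is nonnegative whenever every vertex of $A$ having a neighbour in $A$ has degree at least $2$, which always holds. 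The proof then splits by the value of $d(v)$.

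\textbf{Degree $1$ and degree at least $4$.} If $d(v)=1$ or $d(v)\ge 4$, the first term is nonnegative. It then suffices to control vertices of degree $1$ in $B$. For these I would use Lemma~\ref{lemdeg-eq}~(I) and Lemma~\ref{lemdeggeq3}~(I), which restrict where leaves can sit relative to other leaves and degree-$3$ vertices. Where needed, I would also exhibit a subgraph $F_i$ with $\rho(F_i)\ge\rho(N_4)$, which together with Lemma~\ref{subgraphrho} contradicts $\rho(G)\le\rho(N_4)$. The aim is to show that the positive surplus coming from $d(v)^2$ and from high-degree vertices compensates for the deficit of $-1$ contributed by each leaf in $B$.

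\textbf{Degree $2$.} If $d(v)=2$, Lemma~\ref{deg=1}~(I) puts $v$ in a triangle $vv_1v_2$. The second bracket then equals $d(v_1)+d(v_2)-4$, so I must show $d(v_1)+d(v_2)\ge 6$, possibly with help from the third bracket. Lemma~\ref{lemdeg-eq}~(II) excludes neighbours of degree $3$. Lemma~\ref{lemdeggeq3}~(II),(IV) and the forbidden graphs $F_1,F_2$ then exclude the remaining small configurations.

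\textbf{Degree $3$.} If $d(v)=3$ and $e(A)=0$, Lemma~\ref{deg3} gives exactly the missing $+2$, and identifies the equality case. If $d(v)=3$ and $e(A)\ge 1$, the edges inside $A$ contribute $d(v_i)+d(v_j)-4$ per edge. Lemma~\ref{lemdeg-eq}~(II) forces the endpoints of such an edge to have degree not equal to $2$, and Lemma~\ref{lemdeggeq3}~(IV) forces degree at least $3$. From these I expect a surplus of at least $2$.

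\textbf{Equality and identification of $N_4$.} Once $F(v)\ge 0$ is established everywhere, I would trace the equality conditions, using Fact~\ref{fact1} and the equality case of Lemma~\ref{deg3}. These should force the local structure to be degree-$3$ vertices in triangles with pendant-type attachments exactly as in $N_4$. Connectivity then gives $G\cong N_4$.

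I expect the main obstacle to be the $d(v)\in\{2,3\}$ cases together with leaves in $N_2(v)$. There the bound is tight, so one needs a careful catalogue of small configurations, each excluded either by a saturation lemma or by an explicit subgraph $F_i$ whose spectral radius must be checked to be at least $\rho(N_4)$.
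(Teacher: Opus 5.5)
Your framework is the paper's. Your three-bracket expression is exactly \eqref{eqkey}, and the plan (prove $F(v)\ge 0$ by cases on $d(v)$, apply Theorem~\ref{tp4suffcondi}, then read the structure off $F\equiv 0$) is the paper's plan. The gap is that the one hard step is deferred: showing that pendant vertices in $B=N_2(v)$, each contributing $-1$ to the third bracket, are compensated. The tools you list do not accomplish it.

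\emph{Case $d(v)=1$.} Here there is no surplus at all. The first term $(d(v)-1)(d(v)-4)$ is $0$, the second bracket vanishes, and $F(v)=\sum_{w\in B}(d(w)-2)$. The other leaves at $v_1$ are invisible to Lemma~\ref{lemdeg-eq}~(I), since they share the neighbour $v_1$. For example, let $d(v_1)=5$ with four pendant leaves (including $v$) and one neighbour $u$ of degree $3$. Lemma~\ref{lemdeggeq3}~(I) only makes the other two neighbours of $u$ adjacent and of degree at least $3$, and still $F(v)=-2$. The paper excludes this with $\rho(F_9)>\rho(N_4)$, and the $d(v)=1$ case as a whole needs $F_1,\ldots,F_{10}$.

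\emph{Case $d(v)=3$ with $v_1v_2\in E(G)$.} The surplus $d(v_1)+d(v_2)-4\ge 2$ only cancels the first term $-2$, and the third bracket can be negative. Take $d(v_1)=d(v_2)=d(v_{11})=d(v_{21})=3$ and $v_3$ of degree $4$ carrying three pendant leaves. None of the lemmas you cite excludes this, and it gives $F(v)=-1$. The paper needs three extra ingredients here: $F_{13}$ to force $d(v_1)=d(v_2)=3$, a separate saturation argument via $G+vv_{21}$ to get $d(v_{11}),d(v_{21})\ge 3$, and $F_{14}$ to force $d(v_3)=3$.

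\emph{Case $d(v)\ge 4$ with no leaf in $A$.} Lemma~\ref{lemdeg-eq}~(I) is then unavailable, and the same problem arises. The paper uses $F_{19},F_{20}$ to force $d(v)=4$, then $F_{15},\ldots,F_{18}$ together with Lemma~\ref{lemdeggeq3}~(III). For instance, let $A$ be independent with two vertices of degree $4$, each carrying two leaves and one neighbour of degree $3$, and two vertices of degree $3$, each carrying one leaf and one neighbour of degree $3$. This gives $F(v)=-2$; the paper removes it via $\rho(F_{17})=\rho(N_4)$ and proper containment.

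So ``exhibit an $F_i$ where needed'' is not a detail but the substance of the lemma. Because you never derive the equality configurations for each degree class, your final step (``connectivity then gives $G\cong N_4$'') has nothing to stand on. The paper uses exactly those configurations to rule out $d(v)=2$ and $d(v)=4$ before assembling $N_4$.

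Your $d(v)=2$ case, by contrast, does go through with the tools you name, and more cheaply than in the paper. The bound $d(v_1)+d(v_2)\ge 6$ is immediate; the real issue is again leaves in $B$. Apply Lemma~\ref{lemdeggeq3}~(IV) at $u=v_i$ with $u_1=v$ and $u_2=v_{3-i}$. This shows every $v_i$ of degree at least $4$ has a non-leaf neighbour in $B$. By Lemma~\ref{deg=1}~(I) together with $F_1,F_2$, that neighbour has degree at least $3$. Counting then gives $F(v)\ge 0$ without the paper's $F_{11},F_{12}$.
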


\begin{proof}
We first prove $\rho(G)\ge \rho(N_4)$. 
By Theorem \ref{tp4suffcondi}, it suffices to show that $F(v)\ge 0$ for each $v\in V(G)$. 
By calculations, we have 
\begin{align}
 F(v)
=&d^2(v)+\sum_{w\in N(v)}|N(w)\cap N(v)| \cdot (d(w)-2)+2\sum_{w\in N(v)}|N(w)\cap N(v)|\notag\\
&+\sum_{w\in N_2(v)}|N(w)\cap N(v)| \cdot d(w)- 2d(v)-4e(A)-2e(A,B) -3d(v)+4\notag\\
=&d^2(v)-5d(v)+4+\sum_{w\in N(v)}|N(w)\cap N(v)| \cdot (d(w)-2)+2\cdot2e(A)\notag\\
&+\sum_{w\in N_2(v)}|N(w)\cap N(v)| \cdot d(w)-4e(A)-2e(A,B)\notag\\
=&d^2(v)-5d(v)+4+\sum_{w\in N(v)}|N(w)\cap N(v)| \cdot (d(w)-2) \notag\\
&+\sum_{w\in N_2(v)}|N(w)\cap N(v)| \cdot d(w)-2e(A,B).\label{eqkey}
\end{align}
 
 Let $N(v)=\{v_1,\ldots,v_{d(v)}\}$ and $N(v_i)=\{v,v_{i1},v_{i2},\ldots,v_{i(d(v_i)-1)}\}$ for $1\leq i\leq d(v)$. Since $\rho(G)\le \rho(N_4)=\frac{1+\sqrt{17}}{2}$, we have $\Delta(G)\le 6$. 
\begin{case}
    $d(v)=1$.
\end{case}
In this case, Equation \eqref{eqkey} becomes
$$F(v)=\sum_{w\in N_2(v)}d(w)-2e(A,B)=2e(B)+e(B,C)-e(A,B).$$ Note that $e(B)\leq 1$, otherwise $\rho(G)>\rho(N_4)$ since $F_1$ or $F_2$ is a subgraph of $G$. If $e(B)= 1$, then $3\leq d(v_1)\leq 5$ by $ \rho(F_3)>\rho(N_4)$. From Lemma \ref{lemdeggeq3} (IV) and Lemma \ref{deg=1} (I), $e(B,C)\geq 2$. Hence,
\begin{align*}
    F(v)=2+e(B,C)-e(A,B)\geq 4-e(A,B)=5-d(v_1)\geq 0.
\end{align*}

If $e(B)=0$, then $3\leq d(v_1)\leq 6$ and $d(v_{1i})=1$ or $d(v_{1i})\geq 3$ for $1\leq i\leq d(v_1)-1$ by Lemma \ref{deg=1} (I). 
\begin{itemize}
    \item [\rm i)]  $d(v_1)=6$: Note that $d(v_{1i})\leq 3$ for $i\in [5]$ by $\rho(F_4)>\rho(N_4)$. And all vertices in $N(v_1)$ except one are of degree $1$ since $\rho(F_5)>\rho(N_4)$ and $n\geq8$. Without loss of generality, assume that $d(v_{15})=3$ and $d(v_{1i})=1$ for $i\in [4]$. Let $N(v_{15})=\{v_1,v_{151},v_{152}\}$. Then $v_{151}v_{152}\in E(G)$ by Lemma \ref{lemdeggeq3} (I). It implies $\rho(G)\geq  \rho(F_6)>\rho(N_4)$, a contradiction.
    
    \item [\rm ii)]  $d(v_1)=5$: When $e(B,C)\geq e(A,B)=4$, then $F(v)\geq 0$. When $e(B,C)\leq 3$, by Lemma \ref{deg=1} (I) and $n\geq 8$, all vertices in $N(v_1)$ have degree $1$ except for one vertex, say $v_{11}$ with $3\leq d(v_{11})\leq 4$. If $d(v_{11})=4$, then $E(G[N(v_{11})])=\emptyset$ otherwise $\rho(G)\geq \rho(F_7)>\rho(N_4)$. With Lemma \ref{deg=1} (I) and Lemma \ref{lemdeg-eq} (I), we have $d(v_{11j})\geq 3$ for any $v_{11j}\in N(v_{11})$. According to Lemma \ref{subgraphrho} and Lemma \ref{subdividingrho}, we see that $\rho(G)\geq  \rho(F_8)>\rho(N_4)$, a contradiction. If $d(v_{11})=3$, let $N(v_{11})=\{v_1,v_{111},v_{112}\}$. Then $v_{111}v_{112}\in E(G)$ and $d(v_{11j})\geq3$ for $i\in [2]$ by Lemma \ref{lemdeggeq3} (I), which implies that $\rho(G)\geq \rho(F_9)>\rho(N_4)$, a contradiction.
    
    \item [\rm iii)]   $d(v_1)=4$: When $e(B,C)\geq e(A,B)=3$, then $F(v)\geq 0$. When $e(B,C)\leq 2$, assume that $d(v_{11})=3$, $d(v_{12})=1$ and $d(v_{13})=1$ according to $n\geq 8$ and Lemma \ref{deg=1} (I). Let $N(v_{11})=\{v_1,v_{111},v_{112}\}$. Then $v_{111}v_{112}\in E(G)$ and $d(v_{11j})\geq3$ for $i\in [2]$ by Lemma \ref{lemdeggeq3} (I). Moreover, $|N(v_{111})\cap N(v_{112})|\leq1$ and $e(N(v_{11i}))=1$ for $i\in [2]$ since $\rho(F_1)=\rho(F_2)=\rho(N_4)$. From Lemma \ref{deg=1} (I), Lemma \ref{lemdeg-eq} (I) and Lemma \ref{subdividingrho}, it follows that $\rho(G)\geq \rho(F_{10})>\rho(N_4)$, a contradiction.
    
    \item [\rm iv)] $d(v_1)=3$: By Lemma \ref{deg=1} and Lemma \ref{lemdeg-eq} (II) and $n\geq 8$, we have $e(B,C)\geq 2=e(A,B)$. Hence $F(v)\geq 0$.
\end{itemize} 

Hence, for $v\in V(G)$ with $d(v)=1$, we have $F(v)\geq 0$ with equality if and only if $e(B)=1$, $d(v_1)=5$ and $e(B,C)=2$; or $e(B)=0$ and $2\leq e(B,C)=d(v_1)-1\leq 4$.
\begin{case}
    $d(v)=2$.
\end{case}
In this case $v_1v_2\in E(G)$ by Lemma \ref{deg=1} (I) and 
$$F(v)=-2+\sum_{w\in N_2(v)}|N(w)\cap N(v)| \cdot d(w)-e(A,B).$$
Observe that $|N(w)\cap N(v)|=1$ for any $w\in B$ otherwise $\rho(G)>\rho(F_1)=\rho(N_4)$. Then \begin{align*}
    F(v)&= -2+\sum_{w\in N_2(v)}d(w)-e(A,B)\\
&=-2+e(A,B)+2e(B)+e(B,C)-e(A,B)\\
     &=2e(B)+e(B,C)-2.
\end{align*}
If $e(B)=0$ and $e(B,C)\leq 1$, then $e(B,C)=0$ by Lemma \ref{deg=1} (I). It follows from $ n\geq 8$ that $F_{11}$ or $F_{12}$ is a subgraph of $G$, which contradicts $\rho(G)\leq \rho(N_4)$. If $e(B)\geq 1$ or $e(B,C)\geq 2$, then $F(v)\geq 0$ and equality holds only if either $e(B)=1$ and $e(B,C)=0$, or $e(B)=0$ and $e(B,C)=2$. Note that when $e(B)=1$ and $e(B,C)=0$, by Lemma \ref{deg=1} (I) and $|N(w)\cap N(v)|=1$ for any $w\in B$, we have $ \rho(G)>\rho(F_2)=\rho(N_4)$, a contradiction. It follows that $F(v)\geq 0$ for $v\in V(G)$ with $d(v)=2$, with equality if and only if $e(A)=1$ and there is exactly one vertex in $B$ with degree $3$ while the others (if exist) have degree $1$.

\begin{case}
    $d(v)=3$.
\end{case}

If $e(A)\ge 2$, then  $\rho(G)>\rho(F_1)=\rho(N_4)$ by $n\ge 8$. We only need to consider the two cases $e(A)=1$ and $e(A)=0$. 

If $e(A)=1$, say $E(G[A])=\{v_1v_2\}$, then $$F(v)= d(v_1)+d(v_2)-6-2e(A,B)+\sum_{w\in N_2(v)} |N(w)\cap N(v)|\cdot d(w).$$
When $d(v_3)=1$, then $e(A,B)=d(v_1)+d(v_2)-4$ and
$d(w)\geq 3$ for any $w\in B$ by Lemma \ref{lemdeg-eq} (I) and $\rho(F_1)=\rho(F_2)=\rho(N_4)$. Thus, according to Fact \ref{fact1} and Lemma \ref{lemdeggeq3} (IV), we have 
 \begin{align*}
    F(v)&=-2-e(A,B)+\sum_{w\in N_2(v)} |N(w)\cap N(v)|\cdot d(w)\\
    &> -2-e(A,B)+2e(A,B)\\
    &=e(A,B)-2\geq 0.
    \end{align*} 
Assume $d(v_3)\geq 2$. By Lemma \ref{deg=1} (I) and Lemma \ref{lemdeg-eq} (II), we get $d(v_i)\geq 3$ for $i\in [3]$. Then $d(v_1)=d(v_2)=3$ otherwise $\rho(G)\geq \rho(F_{13})>\rho(N_4)$ by Lemma \ref{subgraphrho} and Lemma \ref{subdividingrho}. Let $N(v_i)\setminus\{v,v_1,v_2\}=\{v_{i1}\}$ for $i\in [2]$. We claim that $d(v_{i1})\geq 3$ for any $i\in [2]$. Otherwise, according to Lemma \ref{lemdeg-eq} (II), assume $d(v_{11})=1$. Since $G+vv_{21}$ contains a copy of $tP_4$, we find that $(T(vv_{21})-P_4(vv_{21})) \cup \{vv_1v_2v_{21}\}$ is a copy of $tP_4$ in $G$, a contradiction. So $d(v_3)=3$ by $\rho(F_{14})>\rho(N_4)$ and Lemma \ref{subdividingrho}. Thus,
\begin{align*}
   F(v)\geq -8+d(v_{11})+d(v_{21})+d(v_3)-1\geq 0.
    \end{align*} 

 If $e(A)=0$, then $|N(w)\cap N(v)|=0$ for any $w\in N(v)$ and by Lemma \ref{deg3},
 \begin{align*}
 F(v)=-2-2e(A,B)+\sum_{w\in   N_2(v)} |N(w)\cap N(v)|\cdot d(w)\geq 0.
    \end{align*}
    
 Hence, for $v\in V(G)$ with $d(v)=3$, we have $F(v)\geq 0$ with equality if and only if one of the following holds: (i) $E(A)=\{v_1v_2\}$, $d(v_i)=d(v_{11})=d(v_{21})=3$ for $i\in [3]$ and $d(v_{31})=d(v_{32})=1$; (ii) $e(A)=0$, $d(v_1)=d(v_2)=1$ and 
 $d(v_3)=3$ with $N(v_3)=\{v,v_{31},v_{32}\}$,  $d(v_{31})=d(v_{32})=3$ and $v_{31}v_{32}\in E(G)$.
 
\begin{case}
    $d(v)\geq 4$.
\end{case}
 Note that $e(A)\leq 1$ since $\rho(F_1)=\rho(F_2)=\rho(N_4)$ and 
 $$F(v)\geq \sum_{w\in N_2(v)}|N(w)\cap N(v)| \cdot d(w)-2e(A,B).$$
 If there is a vertex in $A$ of degree one, then $d(w)\geq 2$ for each $w\in B$ by Lemma \ref{lemdeg-eq} (I). It follows from Fact \ref{fact1}, that $F(v)\geq 0$. Assume $d(u)\geq 2$ for any $u\in A$. Then $d(v)=4$. Otherwise according to Lemma \ref{deg=1} (I) and Lemma \ref{subdividingrho}, we have $\rho(G)\geq \rho(F_{19})>\rho(N_4)$ or $\rho(G)\geq \rho(F_{20})>\rho(N_4)$, a contradiction. 
 
 When $e(A)=1$ (say $E(G[A])=\{v_1v_2\}$), then $d(v_3)\geq 3$ and $d(v_4)\geq 3$ according to Lemma \ref{deg=1} (I). Moreover, we claim that $d(v_1)\geq 3$ or $d(v_2)\geq 3$. Otherwise, $d(v_1)=d(v_2)=2$. Since $G+v_3v_4$ contains a copy of $tP_4$, we find that $(T(v_3v_4)-P_4(v_3v_4)) \cup \{v_1v_2vv_3\}$ is a copy of $tP_4$ in $G$, a contradiction. With Lemma \ref{subdividingrho}, we obtain $\rho(G)\geq \rho(F_{15})>\rho(N_4)$, a contradiction.

 When $e(A)=0$, then $d(v_i)\geq 3$ for $i\in [4]$ by Lemma \ref{deg=1} (I). Firstly, we assert that $d(v_i)\leq 4$ for $i\in [4]$ and there is at most one vertex of degree $4$ in $A$, otherwise $\rho(G)>\rho(F_{16})>\rho(N_4)$ or $\rho(G)>\rho(F_{17})=\rho(N_4)$ by Lemma \ref{deg=1} (II) and Lemma \ref{subdividingrho}, a contradiction. Consider $d(v_i)=3$ for $i\in [4]$. Recall that $N(v_i)=\{v,v_{i1},v_{i2},\ldots,v_{i(d(v_1)-1)}\}$ for $1\leq i\leq d(v)$. If there is a vertex in $N(v)$, say $v_4$, such that $d(w)=1$ for each $w\in N(v_4)\setminus\{v\}$, then $d(v_{ij})\geq 3$ for $i\in [3]$ and $j\in [2]$ by Lemma \ref{lemdeggeq3} (III) and Lemma \ref{lemdeg-eq}  (II), which implies that $F(v) =\sum_{i\in [4],j\in [2]}d(v_{ij})-2e(A,B)\geq6\times 3+2\times1-2\times 8>0.$
So assume that $d(v_{i1})\geq 2$ for each $i\in[4]$. With Lemma \ref{lemdeg-eq} (II), we get $d(v_{i1})\geq 3$ where $i\in[4]$. Then $ F(v)=\sum_{i\in [4],j\in [2]}d(v_{ij})-2e(A,B)
    \geq4\times1+4\times 3-2\times 8=0.$
Now, suppose that $d(v_i)=3$ for $i\in [3]$ and $d(v_4)=4$. If there is a vertex $v_i\in N(v)$ such that $d(w)=1$ for each $w\in N(v_i)\setminus\{v\}$, then we derive $F(v)>0$ by a similar reasoning to above. Assume $d(v_{i1})\geq 2$ for each $i\in[4]$. Note that $d(v_{i1})\geq 3$ for each $i\in[3]$ by Lemma \ref{lemdeg-eq} (II). Then $\rho(G)> \rho(F_{18})>\rho(N_4)$ from Lemma \ref{subdividingrho}, a contradiction. 

Thus, for $v\in V(G)$ with $d(v)\geq 4$, we have $F(v)\geq 0$ with equality if and only if $d(v)=4$ and one of the following holds: (i) $d(v_1)=1$, $d(v_2)=d(v_3)=2$, $E(A)=\{v_2 v_3\}$, and $d(w)=2$ for any $w\in N_2(v)$;  (ii) $d(v_1)=1$, $e(A)=0$, and $d(w)=2$ for any $w\in N_2(v)$; (iii) $e(A)=0$, $d(v_i)=d(v_{i1})=3$, and $d(v_{i2})=1$ for $i\in [4]$.

Consequently, we obtain $F(v)\geq 0$ for each $v\in V(G)$. By Theorem \ref{tp4suffcondi} and $\rho(G)\leq \rho(N_4)$, we conclude that $\rho(G)=\rho(N_4)$ and $F(v)= 0$ for each $v\in V(G)$. By checking the conditions for $F(v)= 0$, we find that $d(v)\leq 4$. The conditions for $F(v)= 0$ when $d(v)=2$ and $d(v)=3$ imply that $d(v)\neq 2$ for any $v\in V(G)$. It follows that if $d(v)=1$, then $3\leq d(v_1)\leq 4$ and all the vertices in $N_2(v)$ have degree $1$, except for one vertex with degree $d(v_1)$. Combining with the above conditions for $F(v)= 0$, we deduce that $d(v)\neq 4$ for any $v\in V(G)$. Therefore, it follows that $G\cong N_4$.
\end{proof}

\begin{theproof}[Proof of Theorem \ref{tp4spec}] 
Let $G$ be a $tP_4$-saturated graph with $t\ge 2$ and $n\ge 4t$. Then there is a copy of $(t-1)P_4$ in $G$ and there is a component of $G$ contains a copy of $P_4$, denoted by $G_1$. 
Clearly, $G_1$ is also $t_1P_4$-saturated for some $t_1\ge 2$. If $|V(G_1)|<4t_1$, then $G_1$ must be a clique. Then $\rho(G)\geq \rho(G_1)\ge \rho(K_4)=3>\rho(N_4)$. We may assume that $|V(G_1)|\ge 4t_1$. By Lemma \ref{contp4}, $\rho(G)\ge \rho(G_1)\ge \rho(N_4)$. 
The equality holds if and only if each component of $G$ containing $P_4$ is isomorphic to $N_4$. Since $G$ is $tP_4$-saturated, there are $t-1$ copies of $N_4$ in $G$ and no isolated vertices. Moreover, the remaining components are $P_4$-saturated and each has spectral radius no more than $\rho(N_4)$. It follows that $G\cong (t-1)N_4\cup Z$ where $Z\in \big\{\bigcup_{i=2}^{3}x_iK_i\cup (\bigcup_{i=4}^{7} x_iK_{1,i-1}): x_i\ge 0$ and $\sum_{i=2}^{7} i x_i=n-12t+12\big\}$.
\end{theproof}

 \begin{figure}[H]
    \centering 
\subfigure[$F_1: \rho=\frac{1+\sqrt{17}}{2}$]{
    \begin{minipage}{0.22\textwidth}
      \centering
  	\begin{tikzpicture}[scale=0.4]
\node[circle,fill=black,draw,inner sep=0pt,minimum size=1mm] (x) at (2.5,2) {};
            \node[circle,fill=black,draw,inner sep=0pt,minimum size=1mm] (a) at (0,0) {};
            \node[circle,fill=black,draw,inner sep=0pt,minimum size=1mm] (b) at (2.5,0){};
            \node[circle,fill=black,draw,inner sep=0pt,minimum size=1mm] (c) at (5,0){};
            \draw (x)--(a) -- (b)--(c) -- (x);
            \draw(b) -- (x); 
	\end{tikzpicture}  
	 \end{minipage}
}
\subfigure[$F_2: \rho=\frac{1+\sqrt{17}}{2}$]{
    \begin{minipage}{0.22\textwidth}
      \centering
	\begin{tikzpicture}[scale=0.4]
\node[circle,fill=black,draw,inner sep=0pt,minimum size=1mm] (x) at (2.5,2) {};
            \node[circle,fill=black,draw,inner sep=0pt,minimum size=1mm] (a) at (0,0) {};
            \node[circle,fill=black,draw,inner sep=0pt,minimum size=1mm] (b) at (1.5,0){};
            \node[circle,fill=black,draw,inner sep=0pt,minimum size=1mm] (c) at (3.5,0){};
             \node[circle,fill=black,draw,inner sep=0pt,minimum size=1mm] (d) at (5,0){};
            \draw (x)--(a) -- (b)--(x);
            \draw (x)--(c) -- (d)--(x); 
	\end{tikzpicture} 
\end{minipage}
}
\subfigure[$F_3: \rho=2.68133$]{
    \begin{minipage}{0.22\textwidth}
      \centering
  	\begin{tikzpicture}[scale=0.4]
 \node[circle,fill=black,draw,inner sep=0pt,minimum size=1mm] (x) at (2.5,2) {};
            \node[circle,fill=black,draw,inner sep=0pt,minimum size=1mm] (a) at (0,0) {};
            \node[circle,fill=black,draw,inner sep=0pt,minimum size=1mm] (b) at (1,0){};
            \node[circle,fill=black,draw,inner sep=0pt,minimum size=1mm] (c) at (2,0){};
            \node[circle,fill=black,draw,inner sep=0pt,minimum size=1mm] (d) at (3,0){};
            \node[circle,fill=black,draw,inner sep=0pt,minimum size=1mm] (e) at (4,0){};
            \node[circle,fill=black,draw,inner sep=0pt,minimum size=1mm] (f) at (5,0){};
            \draw (x)--(a) -- (b)--(x);
            \draw (x)--(c);
            \draw (x)--(d);
            \draw (x)--(e);
             \draw (x)--(f);
	\end{tikzpicture}  
	 \end{minipage}
    } 
	 \subfigure[$F_4: \rho=2.60601$]{
    \begin{minipage}{0.22\textwidth}
      \centering
  	\begin{tikzpicture}[scale=0.4]
 \node[circle,fill=black,draw,inner sep=0pt,minimum size=1mm] (x) at (2.5,2) {};
            \node[circle,fill=black,draw,inner sep=0pt,minimum size=1mm] (a) at (0,1) {};
            \node[circle,fill=black,draw,inner sep=0pt,minimum size=1mm] (b) at (1,1){};
            \node[circle,fill=black,draw,inner sep=0pt,minimum size=1mm] (c) at (2,1){};
            \node[circle,fill=black,draw,inner sep=0pt,minimum size=1mm] (d) at (3,1){};
            \node[circle,fill=black,draw,inner sep=0pt,minimum size=1mm] (e) at (4,1){};
            \node[circle,fill=black,draw,inner sep=0pt,minimum size=1mm] (f) at (5,1){};
              \node[circle,fill=black,draw,inner sep=0pt,minimum size=1mm] (a1) at (-1,0){};
                \node[circle,fill=black,draw,inner sep=0pt,minimum size=1mm] (a2) at (0,0){};
             \node[circle,fill=black,draw,inner sep=0pt,minimum size=1mm] (a3) at (1,0){};
            \draw (x)--(a);
            \draw (x)--(b);
            \draw (x)--(c);
            \draw (x)--(d);
            \draw (x)--(e);
             \draw (x)--(f);
             \draw (a)--(a1);
            \draw (a)--(a2);
             \draw (a)--(a3);
	\end{tikzpicture} 
	 \end{minipage}}
	 \subfigure[$F_5: \rho=2.61313$]{
    \begin{minipage}{0.22\textwidth}
      \centering
  	\begin{tikzpicture}[scale=0.4]
         \node[circle,fill=black,draw,inner sep=0pt,minimum size=1mm] (x) at (2.5,2) {};
            \node[circle,fill=black,draw,inner sep=0pt,minimum size=1mm] (a) at (0,1) {};
            \node[circle,fill=black,draw,inner sep=0pt,minimum size=1mm] (b) at (1,1){};
            \node[circle,fill=black,draw,inner sep=0pt,minimum size=1mm] (c) at (2,1){};
            \node[circle,fill=black,draw,inner sep=0pt,minimum size=1mm] (d) at (3,1){};
            \node[circle,fill=black,draw,inner sep=0pt,minimum size=1mm] (e) at (4,1){};
            \node[circle,fill=black,draw,inner sep=0pt,minimum size=1mm] (f) at (5,1){};
              \node[circle,fill=black,draw,inner sep=0pt,minimum size=1mm] (a1) at (-0.3,0){};
                \node[circle,fill=black,draw,inner sep=0pt,minimum size=1mm] (a2) at (0.3,0){};
             \node[circle,fill=black,draw,inner sep=0pt,minimum size=1mm] (b1) at (0.7,0){};
             \node[circle,fill=black,draw,inner sep=0pt,minimum size=1mm] (b2) at (1.3,0){};
            \draw (x)--(a);
            \draw (x)--(b);
            \draw (x)--(c);
            \draw (x)--(d);
            \draw (x)--(e);
             \draw (x)--(f);
             \draw (a)--(a1);
            \draw (a)--(a2);
             \draw (b)--(b1);
             \draw (b)--(b2);
	\end{tikzpicture} 
	 \end{minipage}}
	  	 \subfigure[$F_6: \rho=2.62386$]{
    \begin{minipage}{0.22\textwidth}
      \centering
  	\begin{tikzpicture}[scale=0.4]
  \node[circle,fill=black,draw,inner sep=0pt,minimum size=1mm] (x) at (2.5,2) {};
            \node[circle,fill=black,draw,inner sep=0pt,minimum size=1mm] (a) at (0,1) {};
            \node[circle,fill=black,draw,inner sep=0pt,minimum size=1mm] (b) at (1,1){};
            \node[circle,fill=black,draw,inner sep=0pt,minimum size=1mm] (c) at (2,1){};
            \node[circle,fill=black,draw,inner sep=0pt,minimum size=1mm] (d) at (3,1){};
            \node[circle,fill=black,draw,inner sep=0pt,minimum size=1mm] (e) at (4,1){};
            \node[circle,fill=black,draw,inner sep=0pt,minimum size=1mm] (f) at (5,1){};
              \node[circle,fill=black,draw,inner sep=0pt,minimum size=1mm] (a1) at (-0.5,0){};
                \node[circle,fill=black,draw,inner sep=0pt,minimum size=1mm] (a2) at (0.5,0){};
  (1.3,0){};
            \draw (x)--(a);
            \draw (x)--(b);
            \draw (x)--(c);
            \draw (x)--(d);
            \draw (x)--(e);
             \draw (x)--(f);
             \draw (a)--(a1);
            \draw (a)--(a2);
           \draw (a1)--(a2);
	\end{tikzpicture} 
	 \end{minipage}}
	 \subfigure[$F_7: \rho=2.59305$]{
    \begin{minipage}{0.22\textwidth}
      \centering
  	\begin{tikzpicture}[scale=0.4]
     \node[circle,fill=black,draw,inner sep=0pt,minimum size=1mm] (x) at (2,2) {};
            \node[circle,fill=black,draw,inner sep=0pt,minimum size=1mm] (a) at (0,1) {};
            \node[circle,fill=black,draw,inner sep=0pt,minimum size=1mm] (b) at (1,1){};
            \node[circle,fill=black,draw,inner sep=0pt,minimum size=1mm] (c) at (2,1){};
            \node[circle,fill=black,draw,inner sep=0pt,minimum size=1mm] (d) at (3,1){};
            \node[circle,fill=black,draw,inner sep=0pt,minimum size=1mm] (e) at (4,1){};
     
              \node[circle,fill=black,draw,inner sep=0pt,minimum size=1mm] (a1) at (-1,0){};
                \node[circle,fill=black,draw,inner sep=0pt,minimum size=1mm] (a2) at (0,0){};
             \node[circle,fill=black,draw,inner sep=0pt,minimum size=1mm] (a3) at (1,0){};
            \draw (x)--(a);
            \draw (x)--(b);
            \draw (x)--(c);
            \draw (x)--(d);
            \draw (x)--(e);
             \draw (a)--(a1);
            \draw (a)--(a2);
             \draw (a)--(a3);
              \draw (a1)--(a2);
	\end{tikzpicture} 
	 \end{minipage}}
	 \subfigure[$F_8: \rho=2.59462$]{
    \begin{minipage}{0.22\textwidth}
      \centering
  	\begin{tikzpicture}[scale=0.4]
      \node[circle,fill=black,draw,inner sep=0pt,minimum size=1mm] (x) at (3.3,2) {};
            \node[circle,fill=black,draw,inner sep=0pt,minimum size=1mm] (a) at (0,2) {};
            \node[circle,fill=black,draw,inner sep=0pt,minimum size=1mm] (b) at (1.8,1){};
            \node[circle,fill=black,draw,inner sep=0pt,minimum size=1mm] (c) at (2.8,1){};
            \node[circle,fill=black,draw,inner sep=0pt,minimum size=1mm] (d) at (3.8,1){};
            \node[circle,fill=black,draw,inner sep=0pt,minimum size=1mm] (e) at (4.8,1){};
              \node[circle,fill=black,draw,inner sep=0pt,minimum size=1mm] (a1) at (-1,1){};
                \node[circle,fill=black,draw,inner sep=0pt,minimum size=1mm] (a2) at (0,1){};
             \node[circle,fill=black,draw,inner sep=0pt,minimum size=1mm] (a3) at (1,1){};
                \node[circle,fill=black,draw,inner sep=0pt,minimum size=1mm] (a11) at (-1.3,0){};
                \node[circle,fill=black,draw,inner sep=0pt,minimum size=1mm] (a12) at (-0.7,0){};
             \node[circle,fill=black,draw,inner sep=0pt,minimum size=1mm] (a21) at (-0.3,0){};
                \node[circle,fill=black,draw,inner sep=0pt,minimum size=1mm] (a22) at (0.3,0){};
                \node[circle,fill=black,draw,inner sep=0pt,minimum size=1mm] (a31) at (0.7,0){};
             \node[circle,fill=black,draw,inner sep=0pt,minimum size=1mm] (a32) at (1.3,0){};
            \draw (x)--(a);
            \draw (x)--(b);
            \draw (x)--(c);
            \draw (x)--(d);
            \draw (x)--(e);
             \draw (a)--(a1);
            \draw (a)--(a2);
             \draw (a)--(a3);   
             \draw (a11)--(a1)--(a12);    
            \draw (a21)--(a2)--(a22);    
             \draw (a31)--(a3)--(a32);    
	\end{tikzpicture} 
	 \end{minipage}}
	 \subfigure[$F_9: \rho=2.59688$]{
    \begin{minipage}{0.22\textwidth}
      \centering
  	\begin{tikzpicture}[scale=0.4]
               \node[circle,fill=black,draw,inner sep=0pt,minimum size=1mm] (x) at (2.8,2) {};
            \node[circle,fill=black,draw,inner sep=0pt,minimum size=1mm] (a) at (0,2) {};
            \node[circle,fill=black,draw,inner sep=0pt,minimum size=1mm] (b) at (1.3,1){};
            \node[circle,fill=black,draw,inner sep=0pt,minimum size=1mm] (c) at (2.3,1){};
            \node[circle,fill=black,draw,inner sep=0pt,minimum size=1mm] (d) at (3.3,1){};
            \node[circle,fill=black,draw,inner sep=0pt,minimum size=1mm] (e) at (4.3,1){};
              \node[circle,fill=black,draw,inner sep=0pt,minimum size=1mm] (a1) at (-0.5,1){};
                \node[circle,fill=black,draw,inner sep=0pt,minimum size=1mm] (a2) at (0.5,1){};
                \node[circle,fill=black,draw,inner sep=0pt,minimum size=1mm] (a11) at (-0.5,0){};
             \node[circle,fill=black,draw,inner sep=0pt,minimum size=1mm] (a21) at (0.5,0){};
            \draw (x)--(a);
            \draw (x)--(b);
            \draw (x)--(c);
            \draw (x)--(d);
            \draw (x)--(e);
             \draw (a)--(a1);
            \draw (a)--(a2);  
             \draw (a2)--(a1);    
            \draw (a11)--(a1);    
            \draw (a21)--(a2);    
	\end{tikzpicture} 
	 \end{minipage}}
	 \subfigure[$F_{10}: \rho=2.56258$]{
    \begin{minipage}{0.22\textwidth}
      \centering
  	\begin{tikzpicture}[scale=0.4]
               \node[circle,fill=black,draw,inner sep=0pt,minimum size=1mm] (x) at (2.3,2) {};
            \node[circle,fill=black,draw,inner sep=0pt,minimum size=1mm] (a) at (0,2) {};
            \node[circle,fill=black,draw,inner sep=0pt,minimum size=1mm] (b) at (1.3,1){};
            \node[circle,fill=black,draw,inner sep=0pt,minimum size=1mm] (c) at (2.3,1){};
            \node[circle,fill=black,draw,inner sep=0pt,minimum size=1mm] (d) at (3.3,1){};
              \node[circle,fill=black,draw,inner sep=0pt,minimum size=1mm] (a1) at (-0.5,1){};
                \node[circle,fill=black,draw,inner sep=0pt,minimum size=1mm] (a2) at (0.5,1){};
                \node[circle,fill=black,draw,inner sep=0pt,minimum size=1mm] (a11) at (-1.5,0){};
             \node[circle,fill=black,draw,inner sep=0pt,minimum size=1mm] (a21) at (0.5,0){};
        \node[circle,fill=black,draw,inner sep=0pt,minimum size=1mm] (a111) at (-1.5,1){};
        \node[circle,fill=black,draw,inner sep=0pt,minimum size=1mm] (a112) at (-2.5,1){};
            \draw (x)--(a);
            \draw (x)--(b);
            \draw (x)--(c);
            \draw (x)--(d);
             \draw (a)--(a1);
            \draw (a)--(a2);  
             \draw (a2)--(a1);    
            \draw (a11)--(a1);    
            \draw (a21)--(a2);    
            \draw (a111)--(a11)--(a112);    
	\end{tikzpicture} 
	 \end{minipage}}
         \subfigure[$F_{11}: \rho=2.59440$]{
    \begin{minipage}{0.22\textwidth}
      \centering
  	\begin{tikzpicture}[scale=0.4]        \node[circle,fill=black,draw,inner sep=0pt,minimum size=1mm] (x) at (2,2) {};
            \node[circle,fill=black,draw,inner sep=0pt,minimum size=1mm] (a) at (1,1) {};
            \node[circle,fill=black,draw,inner sep=0pt,minimum size=1mm] (b) at (3,1){};
              \node[circle,fill=black,draw,inner sep=0pt,minimum size=1mm] (a1) at (0.5,0){};
        \node[circle,fill=black,draw,inner sep=0pt,minimum size=1mm] (a2) at (1,0){};
                \node[circle,fill=black,draw,inner sep=0pt,minimum size=1mm] (a3) at (1.5,0){};
             \node[circle,fill=black,draw,inner sep=0pt,minimum size=1mm] (a4) at (3,0){};
            \draw (x)--(a)--(b)--(x);
            \draw (a)--(a1);
            \draw (a)--(a2);
            \draw (a)--(a3);
             \draw (b)--(a4); 
	\end{tikzpicture} 
	 \end{minipage}}
      \subfigure[$F_{12}: \rho=2.68133$]{
    \begin{minipage}{0.22\textwidth}
      \centering
  	\begin{tikzpicture}[scale=0.4]
       \node[circle,fill=black,draw,inner sep=0pt,minimum size=1mm] (x) at (2,2) {};
            \node[circle,fill=black,draw,inner sep=0pt,minimum size=1mm] (a) at (1,1) {};
            \node[circle,fill=black,draw,inner sep=0pt,minimum size=1mm] (b) at (3,1){};
              \node[circle,fill=black,draw,inner sep=0pt,minimum size=1mm] (a1) at (0.25,0){};
        \node[circle,fill=black,draw,inner sep=0pt,minimum size=1mm] (a2) at (0.75,0){};
                \node[circle,fill=black,draw,inner sep=0pt,minimum size=1mm] (a3) at (1.25,0){};
             \node[circle,fill=black,draw,inner sep=0pt,minimum size=1mm] (a4) at (1.75,0){};
             
            \draw (x)--(a)--(b)--(x);
            \draw (a)--(a1);
            \draw (a)--(a2);
            \draw (a)--(a3);
             \draw (a)--(a4); 
	\end{tikzpicture} 
	 \end{minipage}}
            \subfigure[$F_{13}: \rho=2.58059$]{
    \begin{minipage}{0.22\textwidth}
      \centering
  	\begin{tikzpicture}[scale=0.4]
       \node[circle,fill=black,draw,inner sep=0pt,minimum size=1mm] (x) at (1,2) {};
            \node[circle,fill=black,draw,inner sep=0pt,minimum size=1mm] (a) at (-0.5,1) {};
            \node[circle,fill=black,draw,inner sep=0pt,minimum size=1mm] (b) at (1,1){};
             \node[circle,fill=black,draw,inner sep=0pt,minimum size=1mm] (c) at (2.5,1){};
             \node[circle,fill=black,draw,inner sep=0pt,minimum size=1mm] (a1) at (-1,0){};
        \node[circle,fill=black,draw,inner sep=0pt,minimum size=1mm] (a2) at (0,0){};
                \node[circle,fill=black,draw,inner sep=0pt,minimum size=1mm] (a3) at (1,0){};
             \node[circle,fill=black,draw,inner sep=0pt,minimum size=1mm] (a4) at (2,0){};
             \node[circle,fill=black,draw,inner sep=0pt,minimum size=1mm] (a5) at (3,0){};
            \draw (x)--(a)--(b)--(x);
            \draw (x)--(c);
            \draw (a)--(a1);
            \draw (a)--(a2);
             \draw (b)--(a3); 
             \draw (c)--(a4); 
             \draw (c)--(a5); 
	\end{tikzpicture} 
	 \end{minipage}}
               \subfigure[$F_{14}: \rho=2.59748$]{
    \begin{minipage}{0.22\textwidth}
      \centering
  	\begin{tikzpicture}[scale=0.4]
    \node[circle,fill=black,draw,inner sep=0pt,minimum size=1mm] (x) at (2.5,2) {};
            \node[circle,fill=black,draw,inner sep=0pt,minimum size=1mm] (a) at (0,2) {};
            \node[circle,fill=black,draw,inner sep=0pt,minimum size=1mm] (b) at (2,1){};
            \node[circle,fill=black,draw,inner sep=0pt,minimum size=1mm] (c) at (2.5,1){};
            \node[circle,fill=black,draw,inner sep=0pt,minimum size=1mm] (d) at (3,1){};
              \node[circle,fill=black,draw,inner sep=0pt,minimum size=1mm] (a1) at (-0.5,1){};
                \node[circle,fill=black,draw,inner sep=0pt,minimum size=1mm] (a2) at (0.5,1){};
                \node[circle,fill=black,draw,inner sep=0pt,minimum size=1mm] (a11) at (-1,0){};
             \node[circle,fill=black,draw,inner sep=0pt,minimum size=1mm] (a21) at (1,0){};
        \node[circle,fill=black,draw,inner sep=0pt,minimum size=1mm] (a111) at (-1,1){};
        \node[circle,fill=black,draw,inner sep=0pt,minimum size=1mm] (a112) at (-1.5,1){};
         \node[circle,fill=black,draw,inner sep=0pt,minimum size=1mm] (a211) at (1,1){};
        \node[circle,fill=black,draw,inner sep=0pt,minimum size=1mm] (a212) at (1.5,1){};
            \draw (x)--(a);
            \draw (x)--(b);
            \draw (x)--(c);
            \draw (x)--(d);
             \draw (a)--(a1);
            \draw (a)--(a2);  
             \draw (a2)--(a1);    
            \draw (a11)--(a1);    
            \draw (a21)--(a2);    
            \draw (a111)--(a11)--(a112);  
            \draw (a211)--(a21)--(a212);  
	\end{tikzpicture} 
	 \end{minipage}}
      \subfigure[$F_{15}: \rho=2.57411$]{
    \begin{minipage}{0.22\textwidth}
      \centering
  	\begin{tikzpicture}[scale=0.4]
               \node[circle,fill=black,draw,inner sep=0pt,minimum size=1mm] (x) at (1.5,2) {};
            \node[circle,fill=black,draw,inner sep=0pt,minimum size=1mm] (a) at (0,1) {};
            \node[circle,fill=black,draw,inner sep=0pt,minimum size=1mm] (b) at (1,1){};
            \node[circle,fill=black,draw,inner sep=0pt,minimum size=1mm] (c) at (2,1){};
            \node[circle,fill=black,draw,inner sep=0pt,minimum size=1mm] (d) at (3,1){};
           
              \node[circle,fill=black,draw,inner sep=0pt,minimum size=1mm] (a1) at (0,0){};
                \node[circle,fill=black,draw,inner sep=0pt,minimum size=1mm] (c1) at (1.7,0){};
                \node[circle,fill=black,draw,inner sep=0pt,minimum size=1mm] (c2) at (2.3,0){}; 
                \node[circle,fill=black,draw,inner sep=0pt,minimum size=1mm] (d1) at (2.7,0){};
                \node[circle,fill=black,draw,inner sep=0pt,minimum size=1mm] (d2) at (3.3,0){}; 
            \draw (x)--(a);
            \draw (x)--(b);
            \draw (x)--(c);
            \draw (x)--(d);
             \draw (a)--(b);
            \draw (a)--(a1);  
             \draw (c1)--(c)--(c2);    
             \draw (d1)--(d)--(d2); 
	\end{tikzpicture} 
	 \end{minipage}}
        	 \subfigure[$F_{16}: \rho=2.59462$]{
    \begin{minipage}{0.22\textwidth}
      \centering
  	\begin{tikzpicture}[scale=0.4]
               \node[circle,fill=black,draw,inner sep=0pt,minimum size=1mm] (x) at (2.4,2) {};
            \node[circle,fill=black,draw,inner sep=0pt,minimum size=1mm] (a) at (0,1) {};
            \node[circle,fill=black,draw,inner sep=0pt,minimum size=1mm] (b) at (1.6,1){};
            \node[circle,fill=black,draw,inner sep=0pt,minimum size=1mm] (c) at (3.2,1){};
            \node[circle,fill=black,draw,inner sep=0pt,minimum size=1mm] (d) at (4.8,1){};
           
              \node[circle,fill=black,draw,inner sep=0pt,minimum size=1mm] (a1) at (-0.3,0){};
              \node[circle,fill=black,draw,inner sep=0pt,minimum size=1mm] (a2) at (0.3,0){};
                \node[circle,fill=black,draw,inner sep=0pt,minimum size=1mm] (b1) at (1.3,0){};
              \node[circle,fill=black,draw,inner sep=0pt,minimum size=1mm] (b2) at (1.9,0){};
                \node[circle,fill=black,draw,inner sep=0pt,minimum size=1mm] (c1) at (2.9,0){};
                \node[circle,fill=black,draw,inner sep=0pt,minimum size=1mm] (c2) at (3.5,0){}; 
                \node[circle,fill=black,draw,inner sep=0pt,minimum size=1mm] (d1) at (4.2,0){};
                \node[circle,fill=black,draw,inner sep=0pt,minimum size=1mm] (d2) at (4.6,0){}; 
                 \node[circle,fill=black,draw,inner sep=0pt,minimum size=1mm] (d3) at (5.0,0){}; 
            \node[circle,fill=black,draw,inner sep=0pt,minimum size=1mm] (d4) at (5.4,0){}; 
            \draw (x)--(a);
            \draw (x)--(b);
            \draw (x)--(c);
            \draw (x)--(d); 
              \draw (a1)--(a)--(a2);   
                  \draw (b1)--(b)--(b2);    
             \draw (c1)--(c)--(c2);    
             \draw (d1)--(d)--(d2); 
             \draw (d3)--(d)--(d4); 
	\end{tikzpicture} 
	 \end{minipage}}  
      	 \subfigure[$F_{17}: \rho=\frac{1+\sqrt{17}}{2}$]{
    \begin{minipage}{0.22\textwidth}
      \centering
  	\begin{tikzpicture}[scale=0.4]
               \node[circle,fill=black,draw,inner sep=0pt,minimum size=1mm] (x) at (2.4,2) {};
            \node[circle,fill=black,draw,inner sep=0pt,minimum size=1mm] (a) at (0,1) {};
            \node[circle,fill=black,draw,inner sep=0pt,minimum size=1mm] (b) at (1.6,1){};
            \node[circle,fill=black,draw,inner sep=0pt,minimum size=1mm] (c) at (3.2,1){};
            \node[circle,fill=black,draw,inner sep=0pt,minimum size=1mm] (d) at (4.8,1){};
           
              \node[circle,fill=black,draw,inner sep=0pt,minimum size=1mm] (a1) at (-0.3,0){};
              \node[circle,fill=black,draw,inner sep=0pt,minimum size=1mm] (a2) at (0.3,0){};
                \node[circle,fill=black,draw,inner sep=0pt,minimum size=1mm] (b1) at (1.3,0){};
              \node[circle,fill=black,draw,inner sep=0pt,minimum size=1mm] (b2) at (1.9,0){};
                \node[circle,fill=black,draw,inner sep=0pt,minimum size=1mm] (c1) at (2.7,0){};
                \node[circle,fill=black,draw,inner sep=0pt,minimum size=1mm] (c2) at (3.2,0){};
                \node[circle,fill=black,draw,inner sep=0pt,minimum size=1mm] (c3) at (3.7,0){}; 
                \node[circle,fill=black,draw,inner sep=0pt,minimum size=1mm] (d1) at (4.3,0){};
                \node[circle,fill=black,draw,inner sep=0pt,minimum size=1mm] (d2) at (4.8,0){}; 
                 \node[circle,fill=black,draw,inner sep=0pt,minimum size=1mm] (d3) at (5.3,0){}; 
       
            \draw (x)--(a);
            \draw (x)--(b);
            \draw (x)--(c);
            \draw (x)--(d); 
              \draw (a1)--(a)--(a2);   
                  \draw (b1)--(b)--(b2);    
             \draw (c1)--(c)--(c2);    
             \draw (d1)--(d)--(d2); 
             \draw (d3)--(d); 
              \draw (c3)--(c); 
	\end{tikzpicture} 
	 \end{minipage}}  
           	 \subfigure[$F_{18}: \rho=2.56761$]{
    \begin{minipage}{0.22\textwidth}
      \centering
  	\begin{tikzpicture}[scale=0.4]
               \node[circle,fill=black,draw,inner sep=0pt,minimum size=1mm] (x) at (2.5,2) {};
            \node[circle,fill=black,draw,inner sep=0pt,minimum size=1mm] (a) at (0.2,2) {};
            \node[circle,fill=black,draw,inner sep=0pt,minimum size=1mm] (b) at (1.9,1){};
            \node[circle,fill=black,draw,inner sep=0pt,minimum size=1mm] (c) at (3.1,1){};
            \node[circle,fill=black,draw,inner sep=0pt,minimum size=1mm] (d) at (5.1,2){};
              \node[circle,fill=black,draw,inner sep=0pt,minimum size=1mm] (a1) at (-0.1,1){};
              \node[circle,fill=black,draw,inner sep=0pt,minimum size=1mm] (a2) at (0.5,1){};
                \node[circle,fill=black,draw,inner sep=0pt,minimum size=1mm] (a11) at (-0.4,0){};
              \node[circle,fill=black,draw,inner sep=0pt,minimum size=1mm] (a12) at (0.2,0){};
                \node[circle,fill=black,draw,inner sep=0pt,minimum size=1mm] (b1) at (1.5,0){};
              \node[circle,fill=black,draw,inner sep=0pt,minimum size=1mm] (b2) at (2.3,0){};
               \node[circle,fill=black,draw,inner sep=0pt,minimum size=1mm] (b11) at (1.5,1){};
              \node[circle,fill=black,draw,inner sep=0pt,minimum size=1mm] (b12) at (1.1,1){};
                \node[circle,fill=black,draw,inner sep=0pt,minimum size=1mm] (c1) at (3.5,0){};
                \node[circle,fill=black,draw,inner sep=0pt,minimum size=1mm] (c2) at (2.7,0){};
                \node[circle,fill=black,draw,inner sep=0pt,minimum size=1mm] (c11) at (3.5,1){}; 
                 \node[circle,fill=black,draw,inner sep=0pt,minimum size=1mm] (c12) at (3.9,1){}; 
                \node[circle,fill=black,draw,inner sep=0pt,minimum size=1mm] (d1) at (5.1,1){};
                \node[circle,fill=black,draw,inner sep=0pt,minimum size=1mm] (d2) at (4.5,1){}; 
                \node[circle,fill=black,draw,inner sep=0pt,minimum size=1mm] (d3) at (5.7,1){}; 
            \draw (x)--(a);
            \draw (x)--(b);
            \draw (x)--(c);
            \draw (x)--(d); 
              \draw (a1)--(a)--(a2);   
                  \draw (b1)--(b)--(b2);    
             \draw (c1)--(c)--(c2);    
             \draw (d1)--(d)--(d2); 
      \draw (a11)--(a1)--(a12);   
                  \draw (b11)--(b1)--(b12);    
             \draw (c11)--(c1)--(c12);    
             \draw (d3)--(d); 
	\end{tikzpicture} 
	 \end{minipage}}  
         	 \subfigure[$F_{19}: \rho=2.64575$]{
    \begin{minipage}{0.22\textwidth}
      \centering
  	\begin{tikzpicture}[scale=0.4]
               \node[circle,fill=black,draw,inner sep=0pt,minimum size=1mm] (x) at (2.6,2) {};
            \node[circle,fill=black,draw,inner sep=0pt,minimum size=1mm] (a) at (0,1) {};
            \node[circle,fill=black,draw,inner sep=0pt,minimum size=1mm] (b) at (1.3,1){};
            \node[circle,fill=black,draw,inner sep=0pt,minimum size=1mm] (c) at (2.6,1){};
            \node[circle,fill=black,draw,inner sep=0pt,minimum size=1mm] (d) at (3.9,1){};
             \node[circle,fill=black,draw,inner sep=0pt,minimum size=1mm] (e) at (5.2,1){};
              \node[circle,fill=black,draw,inner sep=0pt,minimum size=1mm] (a1) at (-0.3,0){};
              \node[circle,fill=black,draw,inner sep=0pt,minimum size=1mm] (a2) at (0.3,0){};
                \node[circle,fill=black,draw,inner sep=0pt,minimum size=1mm] (b1) at (1,0){};
              \node[circle,fill=black,draw,inner sep=0pt,minimum size=1mm] (b2) at (1.6,0){};
                \node[circle,fill=black,draw,inner sep=0pt,minimum size=1mm] (c1) at (2.3,0){};
                \node[circle,fill=black,draw,inner sep=0pt,minimum size=1mm] (c2) at (2.9,0){};
                \node[circle,fill=black,draw,inner sep=0pt,minimum size=1mm] (d1) at (3.6,0){};
                \node[circle,fill=black,draw,inner sep=0pt,minimum size=1mm] (d2) at (4.2,0){}; 
                 \node[circle,fill=black,draw,inner sep=0pt,minimum size=1mm] (e1) at (4.9,0){}; 
           \node[circle,fill=black,draw,inner sep=0pt,minimum size=1mm] (e2) at (5.5,0){}; 
            \draw (x)--(a);
            \draw (x)--(b);
            \draw (x)--(c);
            \draw (x)--(d); 
            \draw (x)--(e); 
              \draw (a1)--(a)--(a2);   
                  \draw (b1)--(b)--(b2);    
             \draw (c1)--(c)--(c2);    
             \draw (d1)--(d)--(d2); 
 \draw (e1)--(e)--(e2); 
	\end{tikzpicture} 
	 \end{minipage}}  
       	 \subfigure[$F_{20}: \rho=2.58605$]{
    \begin{minipage}{0.22\textwidth}
      \centering
  	\begin{tikzpicture}[scale=0.4]
                   \node[circle,fill=black,draw,inner sep=0pt,minimum size=1mm] (x) at (2,2) {};
            \node[circle,fill=black,draw,inner sep=0pt,minimum size=1mm] (a) at (0,1) {};
            \node[circle,fill=black,draw,inner sep=0pt,minimum size=1mm] (b) at (1,1){};
            \node[circle,fill=black,draw,inner sep=0pt,minimum size=1mm] (c) at (2,1){};
            \node[circle,fill=black,draw,inner sep=0pt,minimum size=1mm] (d) at (3,1){};
             \node[circle,fill=black,draw,inner sep=0pt,minimum size=1mm] (e) at (4,1){};
       
                \node[circle,fill=black,draw,inner sep=0pt,minimum size=1mm] (c1) at (1.7,0){};
                \node[circle,fill=black,draw,inner sep=0pt,minimum size=1mm] (c2) at (2.3,0){};
        
            \draw (x)--(a);
            \draw (x)--(b);
            \draw (x)--(c);
            \draw (x)--(d); 
            \draw (x)--(e); 
            \draw (a)--(b);   
       \draw (c1)--(c)--(c2);    
	\end{tikzpicture} 
	 \end{minipage}}  
	 \caption{The graphs with $\rho(G)\geq \frac{1+\sqrt{17}}{2}$}
	 \label{graphsF}
\end{figure}
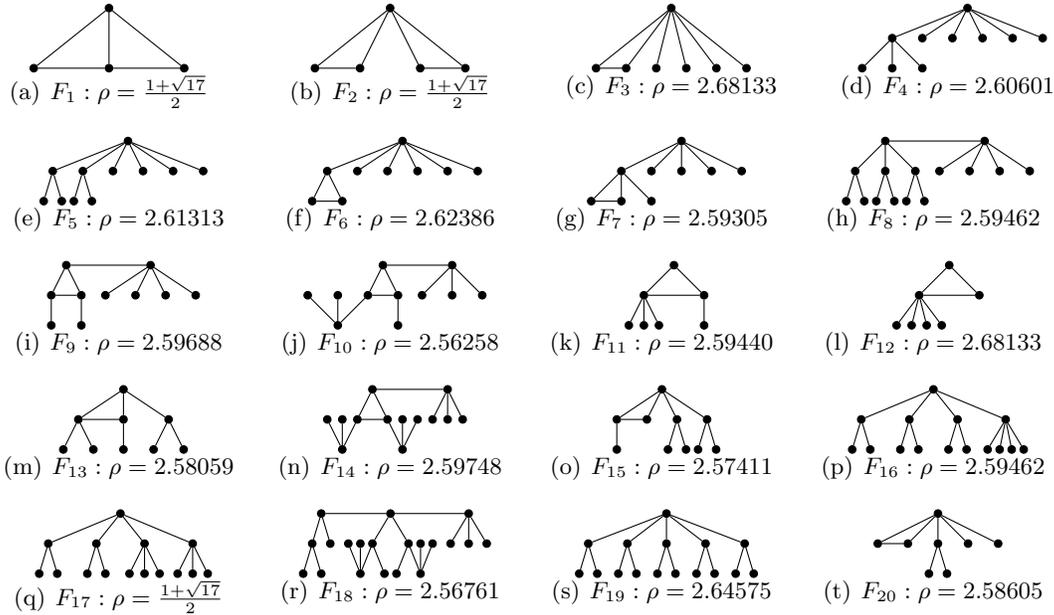

	\subsection*{Acknowledgements}
        Junxue Zhang was supported by the China Postdoctoral Science Foundation (No. 2024M764113).

\end{document}